\newtheorem{lem}{Lemma}[section]
\newtheorem{thm}{Theorem}[section]
\newtheorem{cor}{Corollary}[section]
\newtheorem{exam}{Example}[section]
\newtheorem{rem}{Remark}[section]
\title{On the Ball-Constrained Weighted Maximin Dispersion Problem
\thanks{This research was supported by National Natural Science Foundation of China under grants
11471325 and 11571029, by Beijing Higher Education Young Elite Teacher Project 29201442 and by fundamental research funds for the Central Universities under
grant  YWF-15-SXXY-006.}}
\author{Shu Wang\footnotemark[2]\and Yong  Xia \footnotemark[2]}
\begin{document}
\maketitle
\renewcommand{\thefootnote}{\fnsymbol{footnote}}

\footnotetext[2]{State Key Laboratory of Software Development
              Environment, LMIB of the Ministry of Education, School of
Mathematics and System Sciences, Beihang University, Beijing,
100191, P. R. China ({\tt wangshu.0130@163.com; dearyxia@gmail.com})}

\begin{abstract}
The ball-constrained weighted maximin dispersion problem $(\rm P_{ball})$ is to find a point in an $n$-dimensional Euclidean ball such that the minimum of the weighted Euclidean distance from given $m$ points is maximized. We propose a new second-order cone programming relaxation for $(\rm P_{ball})$. Under the condition $m\le n$, $(\rm P_{ball})$ is polynomial-time solvable since the new relaxation is shown to be tight. In general, we prove that $({\rm P_{ball}})$ is NP-hard. Then, we propose a new  randomized approximation algorithm for solving $({\rm P_{ball}})$, which provides a new approximation bound of $\frac{1-O(\sqrt{\ln(m)/n})}{2}$.
\end{abstract}

\begin{keywords}
 maximin dispersion, convex relaxation, second-order cone programming, approximation algorithm
\end{keywords}

\begin{AMS}
90C22, 90C26, 90C59
\end{AMS}

\pagestyle{myheadings} \thispagestyle{plain} \markboth{S. Wang, Y.
Xia}{Ball-Constrained Maxmin Dispersion Problem}

\section{Introduction}
Consider the following weighted maximin dispersion problem
\begin{eqnarray*}
{\rm (P_{\chi})}~~\max_{x\in \Bbb \chi}\left\{ f(x):=\min_{i=1,\ldots,m} \omega_i\|x-x^i\|^2\right\},
\end{eqnarray*}
where $\chi=\{y\in \Bbb R^n\mid (y_1^2,\ldots,y_n^2,1)^T\in \mathcal{K}\}$, $\mathcal{K}$ is a convex cone, $x^1,\ldots,x^m\in \Bbb R^n$ are given $m$ points, $\omega_i>0$ for $i=1,\ldots,m$ and $\|x\|=\sqrt{x^Tx}$ is the Euclidean norm. Let $v{\rm (P_{\chi})}$ denote the optimal value of the problem  $({\rm P_{\chi}})$. Without loss of generality, we assume that $v{\rm (P_{\chi})}>0$, since $v{\rm (P_{\chi})}=0$ if and only if
$\chi\subseteq\{x^1,\ldots,x^m\}$. When all weights are equal, intuitively, the problem ${\rm (P_{\chi})}$ is to find the largest sphere centering in $\chi$ and enclosing none of the given $m$ points. The maximin dispersion problem has many applications in facility location, spatial management and pattern recognition, see \cite{DW,JMY,Sc,W} and references therein.

In a very recent paper \cite{HA13}, an approximation bound for ${\rm (P_{\chi})}$ is established based on the following  semidefinite programming relaxation (SDP$_{\chi}$) and second-order cone programming relaxation (SOCP$_{\chi}$), respectively:
\begin{eqnarray}
~~ \frac{1}{v({\rm SDP_{\chi}})}=&\min_{Z\in S^{n+1}}&~ Z_{n+1,n+1}\label{sdp}\\
&{\rm s.~t.}& \omega_i {\rm {Tr}}(A^iZ)\geq 1,~i=1,\ldots,m,\nonumber\\
&& (Z_{11},\ldots,Z_{nn},Z_{n+1,n+1})^T\in \mathcal{K},\nonumber\\
&& Z\succeq 0,\nonumber
\end{eqnarray}
\begin{eqnarray}
~~\frac{1}{v({\rm SOCP_{\chi}})}=&\min_{Z\in S^{n+1}}&~ Z_{n+1,n+1}\label{socp}\\
&{\rm s.~t.}& \omega_i {\rm {Tr}}(A^iZ)\geq 1,~i=1,\ldots,m,\label{socp:1}\\
&& (Z_{11},\ldots,Z_{nn},Z_{n+1,n+1})^T\in \mathcal{K},\label{socp:2}\\
&& \left\|\left(\begin{array}{c} 2Z_{j,n+1}\\ Z_{jj}-Z_{n+1,n+1}\end{array}\right)\right\|\leq Z_{jj}+Z_{n+1,n+1},j=1,\ldots,n,\label{socp:3}
\end{eqnarray}
where
$
A^i=\left(\begin{array}{cc}I & -x^i\\-(x^i)^T & \|x^i\|^2\end{array}\right)$ with $I$ being the identity matrix of order $n$,    ${\rm Tr}(AB^T)=\sum_{i=1}^n\sum_{j=1}^na_{ij}b_{ij}$ is
the inner product of two matrices $A$ and $B$, and $Z\succeq 0$ means that $Z$ is positive semidefinite.

Let $Z^*\in S^{n+1}$ be an optimal solution of the above SDP (or SOCP) relaxation,
 where $S^{n+1}$ denotes the set of $(n+1)\times (n+1)$ real symmetric matrices. Haines et al. \cite{HA13} find an approximate solution $\widetilde{x}\in\chi$ satisfying
 \begin{eqnarray*}
&& v({\rm SDP_{\chi}})\geq v({\rm P_{\chi}})\ge f(\widetilde{x})\geq \frac{1- \sqrt{2\ln(m/\rho)\gamma_1^*}}{2}\cdot v({\rm SDP_{\chi}}), \\
{\rm or}~&& v({\rm SOCP_{\chi}})\geq v({\rm P_{\chi}})\ge f(\widetilde{x})\geq \frac{1- \sqrt{2\ln(m/\rho)\gamma_1^*}}{2}\cdot v({\rm SOCP_{\chi}}),
 \end{eqnarray*}
 where   $0<\rho< 1$ and
  \begin{equation}
  \gamma_1^*=\frac{\max_{j=1,\ldots,n}Z^*_{jj}}{\sum_{j=1}^nZ^*_{jj}}.\label{gamma:0}
  \end{equation}
Such a factor as $\frac{1- \sqrt{2\ln(m/\rho)\gamma_1^*}}{2}$ is called an approximation bound in this paper. Notice that the approximation algorithm used in Haines et al. \cite{HA13} is a randomized one. We refer to \cite{HJLZ14,KN08} for more general randomized algorithms for polynomial optimization over binary variables or hyperspheres.

Two typical cases of $\chi$ are considered in \cite{HA13}. The first one is
$\chi=[-1,1]^n$, which corresponds to setting
\begin{equation}
\mathcal{K}=\{y\in \Bbb R^{n+1}\mid y_j\leq y_{n+1},~j=1,\ldots,n\}.\label{k:box}
\end{equation}
We denote this case of $({\rm P_{\chi}})$ by (${\rm P_{box}}$). Then, it has been shown in \cite{HA13} that (${\rm P_{box}}$) is NP-hard. Moreover, in this case, $\gamma_1^*$ defined in (\ref{gamma:0}) can be simplified to
$\gamma_1^*=\frac{1}{n}$,
which no longer depends on $Z^*$.
It immediately follows that (${\rm P_{box}}$) admits a $1/2$ asymptotic approximation bound as $\frac{n}{\ln (m)}$ goes to infinity.

The other case is
$\chi=\{x\mid \|x\|\leq1\}$, which corresponds to letting
\begin{equation}
\mathcal{K}=\{y\in \Bbb R^{n+1}\mid y_1+...+y_n\leq y_{n+1}\}.
\label{k:ball}
\end{equation}
Accordingly, this special case of $({\rm P_{\chi}})$ is denoted by (${\rm P_{ball}}$). However, the results similar to those for (${\rm P_{box}}$) 
remain unknown for (${\rm P_{ball}}$). Actually, they correspond to the three open questions (i),(ii),(vi) raised in the concluding section in \cite{HA13}. For convenience, we write in the following these questions relating to (${\rm P_{ball}}$):

(i) Can we extend the approximation bound $\frac{1-O(\sqrt{\ln(m)/n})}{2}$ to (${\rm P_{ball}}$)?

(ii) Is (${\rm P_{ball}}$) NP-hard?

(iii) When the SDP (or SOCP)
relaxation is tight?

This becomes the first motivation to give a comprehensive study on ${\rm (P_{ball})}$. On the other hand, ${\rm (P_{ball})}$ has its own applications.
Recently, Tenne et al. \cite{Te} propose  a model-classifier framework for solving optimization problems whose objective function values have to be evaluated by computationally expensive simulation. The framework employs a trust-region approach \cite{Conn}. It tries to make the interpolation model accurate enough in the trust region. Otherwise, a new point is selected from the trust region to add into the set of evaluated points for improving the interpolation model.
According to the theory on  the relation between the model accuracy and the distance among the training points \cite{Sc},
the new added point is remote from the existing evaluated points. The formulation of this subproblem is exactly the same as ${\rm (P_{ball})}$, see \cite{Te}.

In this paper, we focus on the study of ${\rm (P_{ball})}$ and answer the above three questions. Some of the new results can be extended to ${\rm (P_{box})}$. Firstly, we give a new convex relaxation for the general ${\rm (P_{\chi})}$, denoted by $({\rm CR_\chi})$. We show that $v({\rm CR_\chi})=v({\rm SDP_\chi})=v({\rm SOCP_\chi})$ when $\chi$ is a unit box or a unit ball. Besides, in the former case, $({\rm CR_\chi})$ reduces to a linear programming problem. In the latter case, $({\rm CR_\chi})$ is a new second-order cone programming problem, whose size is much smaller than that of $({\rm SOCP_\chi})$. Then, we derive a new sufficient condition under which our new convex relaxation for ${\rm (P_{ball})}$ is tight. Moreover, the new condition holds when $m\le n$. It strictly improves the existing sufficient condition $m\le n-1$ proposed in \cite{WX}.
Secondly, we prove that ${\rm (P_{ball})}$ remains NP-hard when $m\ge 2n$. Thirdly, we propose a new approximation algorithm for ${\rm (P_{ball})}$ based on uniform random sampling on the surface of the ball. Using a new tail estimation,
we show that the approximation bound of our new algorithm is $\frac{1-O(\sqrt{\ln(m)/n})}{2}$. It should be noticed that our approximation algorithm does not rely on any convex relaxation.

The remainder is organized as follows. In Section 2, we study a new convex relaxation for ${\rm (P_{\chi})}$. In Section 3, we show that ${\rm (P_{ball})}$ is NP-hard and then identify a polynomial solvable class.
In Section 4, we propose a new approximation algorithm for ${\rm (P_{ball})}$ and establish the approximation bound. Section 5 presents some numerical results. Conclusions are made in Section 6.

Throughout the paper, the notation ``:=''denotes ``define''. We denote by $\Bbb R^n$ the $n$-dimensional vector space. 
For two matrices $A,B\in S^n$, let $A\succ(\succeq)B$ denote that $A-B$
 is positive (semi)definite. $\|A\|:=\max_{\|x\|=1}\|Ax\|$ is the induced norm of the matrix $A$. ${\rm Diag}(a_1,\ldots,a_n)$ stands for the diagonal matrix with principal diagonal elements $a_1,\ldots,a_n$. The one-dimensional intervals
 $\{x \mid a<x<b\}$, $\{x  \mid a<x \le b\}$, $\{x \mid a\le x<b\}$ and $\{x \mid a\le x \le b\}$ are denoted by $(a,b)$, $(a,b]$, $[a,b)$ and $[a,b]$, respectively.
Denote the left- and right-hand limits by $\lim_{x\rightarrow a^-}f(x)$ and $\lim_{x\rightarrow a^+}f(x)$, respectively.  For $a\in\Bbb R^1$, $[a]$ denotes the largest integer less than or equal to $a$. For a nonnegative integer $n$, define the double factorial as $n!!=\prod_{k=0}^{[n/2]}(n-2k)$ and $0!!=1$.
${\rm Pr(\cdot)}$ stands for the probability.

\section{A new convex relaxation}
We propose a new convex relaxation for  ${\rm (P_{\chi})}$, denoted by $({\rm CR_\chi})$, and then show that $v({\rm CR_\chi})=v({\rm SDP_\chi})=v({\rm SOCP_\chi})$ when either $\chi=[-1,1]^n$ or $\chi=\{x\mid \|x\|\leq1\}$.
For convenience, throughout this paper, when $\chi=[-1,1]^n$, we re-denote
$({\rm CR_\chi})$, $({\rm SDP_{\chi}})$ (\ref{sdp}) and $({\rm SOCP_{\chi}})$ (\ref{socp}) by $({\rm CR_{box}})$, $({\rm SDP_{box}})$ and $({\rm SOCP_{box}})$, respectively. Similarly, when $\chi= \{x\mid \|x\|\leq1\}$,
$({\rm CR_\chi})$, $({\rm SDP_{\chi}})$ and $({\rm SOCP_{\chi}})$ are re-denoted by $({\rm CR_{ball}})$, $({\rm SDP_{ball}})$ and $({\rm SOCP_{ball}})$, respectively.


We first reformulate ${\rm (P_{\chi})}$ as an equivalent smooth optimization problem (see also Proposition 2.1 and (11) in \cite{HA13}):
\begin{eqnarray}
{\rm (P_{\chi})'}~~&\max_{x,~\zeta}&~ \zeta\nonumber\\
&{\rm s.~t.}& \omega_i(\|x\|^2-2(x^i)^Tx+
\|x^i\|^2)\geq \zeta,~i=1,\ldots,m, \label{mu:1}\\
&&x\in \chi.\nonumber
\end{eqnarray}
Relaxing the nonlinear term $\|x\|^2$ in (\ref{mu:1}) to a constant upper bound
\begin{equation}
\mu:=\max_{x\in \chi}\|x\|^2~({\rm or~an~upper~bound~if~it~is~hard~to~solve}) \label{mu:2}
\end{equation}
 yields the following convex relaxation:
\begin{eqnarray*}
{\rm (CR_{\chi})}~~&\max_{x,~\zeta}&~ \zeta\\
&{\rm s.~t.}& \omega_i(\mu-2(x^i)^Tx+
\|x^i\|^2)\geq \zeta,~i=1,\ldots,m,\\
&&x\in \chi.
\end{eqnarray*}
When $\chi= [-1,1]^n$, according to the definition (\ref{mu:2}), we have $\mu=n$. Then, the above convex relaxation ${\rm (CR_{\chi})}$ becomes
the following $\emph{linear programming}$ problem:
\begin{eqnarray*}
{\rm (CR_{box})}~~&\max_{x,~\zeta}&~ \zeta\\
&{\rm s.~t.}& \omega_i(n-2(x^i)^Tx+\|x^i\|^2)\geq \zeta,~i=1,\ldots,m,\\
&&x\in [-1,1]^n.
\end{eqnarray*}
Surprisingly,
it is actually as tight as both the SDP and the SOCP relaxations.
\begin{thm}\label{thm00}
$v({\rm CR_{box}})=v({\rm SDP_{box}})=v({\rm SOCP_{box}}).$
\end{thm}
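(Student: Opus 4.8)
The plan is to establish both equalities at once through a cyclic chain of inequalities
$$v({\rm SOCP_{box}}) \le v({\rm CR_{box}}) \le v({\rm SDP_{box}}) \le v({\rm SOCP_{box}}),$$
working directly with the reciprocal formulations (\ref{sdp}) and (\ref{socp}). Throughout I write a feasible matrix as $Z = \left(\begin{array}{cc} X & z \\ z^T & t\end{array}\right)$ with $X \in S^n$, $z \in \Bbb R^n$ and $t = Z_{n+1,n+1}$, and I use the identity ${\rm Tr}(A^i Z) = {\rm Tr}(X) - 2(x^i)^T z + \|x^i\|^2 t$. For the box, the cone constraint (\ref{socp:2}) with $\mathcal{K}$ as in (\ref{k:box}) reads $X_{jj} \le t$ for all $j$, while the SOCP constraint (\ref{socp:3}) is equivalent to $z_j^2 \le X_{jj}\, t$ together with $X_{jj}, t \ge 0$. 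Since $v({\rm P_{box}}) > 0$, each relaxation value is finite and positive, so at optimality $t > 0$ and no division by zero occurs.

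The rightmost inequality $v({\rm SDP_{box}}) \le v({\rm SOCP_{box}})$ is immediate: constraint (\ref{socp:3}) is exactly the $2\times 2$ principal minor condition $\left(\begin{array}{cc} X_{jj} & z_j \\ z_j & t\end{array}\right) \succeq 0$, hence is implied by $Z \succeq 0$; thus the SDP feasible set sits inside the SOCP feasible set, and minimizing the common objective $Z_{n+1,n+1}$ over the smaller set gives a larger value, i.e. $1/v({\rm SDP_{box}}) \ge 1/v({\rm SOCP_{box}})$. For the leftmost inequality I would take an optimal SOCP solution $Z^*$, set $x := z^*/t^*$ and $\zeta := 1/t^*$. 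The bound $(z_j^*)^2 \le X^*_{jj} t^* \le (t^*)^2$ yields $x \in [-1,1]^n$, and dividing the linear constraint $\omega_i {\rm Tr}(A^i Z^*) \ge 1$ by $t^*$, combined with ${\rm Tr}(X^*)/t^* \le n$ (which only relaxes the constraint further), shows that $(x,\zeta)$ is feasible for $({\rm CR_{box}})$ with value $\zeta = 1/t^* = v({\rm SOCP_{box}})$; hence $v({\rm CR_{box}}) \ge v({\rm SOCP_{box}})$.

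The remaining inequality $v({\rm CR_{box}}) \le v({\rm SDP_{box}})$ is the crux, since here I must manufacture a genuine positive semidefinite matrix out of a purely linear-programming certificate. Given an optimal pair $(x^*, \zeta^*)$ for $({\rm CR_{box}})$, I would set $t := 1/\zeta^*$, $z := t\, x^*$, and — this is the key device — choose $X := t\, x^*(x^*)^T + D$ with the diagonal correction $D := {\rm Diag}\big(t(1 - (x^*_1)^2), \ldots, t(1 - (x^*_n)^2)\big) \succeq 0$. This forces $X_{jj} = t$, so the box cone constraint holds with equality and ${\rm Tr}(X) = nt$, exactly reproducing the crude bound $\mu = n$ used to build $({\rm CR_{box}})$; the linear constraints then read $\omega_i {\rm Tr}(A^i Z) = t\,\omega_i(n - 2(x^i)^T x^* + \|x^i\|^2) \ge t\zeta^* = 1$. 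Finally the Schur complement $X - \tfrac{1}{t} z z^T = D \succeq 0$ certifies $Z \succeq 0$, so $Z$ is SDP-feasible with $Z_{n+1,n+1} = t = 1/\zeta^*$, giving $v({\rm SDP_{box}}) \ge v({\rm CR_{box}})$.

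I expect the only real obstacle to be this last construction. The naive choice $X = tI$ fails precisely because $I - x^*(x^*)^T$ need not be positive semidefinite when $\|x^*\|^2 > 1$ on the box, and the diagonal-correction term $D$ is exactly what repairs positive semidefiniteness while simultaneously matching the trace and the diagonal cone constraint. Once these three inequalities close the cycle, all three values coincide and the theorem follows.
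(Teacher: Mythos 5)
Your proof is correct and follows essentially the same route as the paper: your diagonal-corrected rank-one matrix $X = t\,x^*(x^*)^T + D$ is exactly the paper's construction (\ref{zz}), your SOCP-to-CR step is the paper's part (b) verbatim, and the cycle is closed the same way. The only cosmetic difference is that you prove the inequality $v({\rm SDP_{box}})\le v({\rm SOCP_{box}})$ directly via the $2\times 2$ principal-minor containment, whereas the paper simply cites it as a known fact from \cite{HA13}.
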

\begin{proof} The proof is divided into  two parts.

(a) First we construct a feasible solution of $({\rm SDP_{box}})$ from an optimal solution of $({\rm CR_{box}})$.
Since the feasible region is compact, $({\rm CR_{box}})$ has an optimal solution, denoted by $(x^*,\zeta^*)$.
It is trivial to see that $v({\rm P_{box}})>0$. Thus, we have $\zeta^*=v({\rm CR_{box}})\ge v({\rm P_{box}})>0$. Let $x_i^*$ be the $i$-th component of $x^*$ for $i=1,\ldots,n$ and then define
\begin{eqnarray}
\widehat{Z}:=&\frac{1}{\zeta^*}\left(\left(\begin{array}{c} x^*\\ 1\end{array}\right)
\left(\begin{array}{cc} {x^*}^T&1\end{array}\right)+{\rm Diag}\left(1-(x_1^*)^2, \ldots,1-(x_n^*)^2,0\right)\right).\label{zz}
\end{eqnarray}
We can verify that
$\widehat{Z}\succeq 0$ and
\begin{eqnarray*}
&&\widehat{Z}_{jj}= \widehat{Z}_{n+1,n+1},~j=1,\ldots,n,\\
&&\omega_i{\rm Tr}(A^i\widehat{Z})=\frac{\omega_i}{\zeta^*}(n-2(x^i)^Tx^*+\|x^i\|^2)\geq1,~i=1,\ldots,m.
\end{eqnarray*}
That is,
$\widehat{Z}$ is a feasible solution of $({\rm SDP_{box}})$ (\ref{sdp}).  Then, it holds that
 \[
 \frac{1}{v({\rm SDP_{box}})}\leq \widehat{Z}_{n+1,n+1}=\frac{1}{\zeta^*},
 \]
or equivalently,
\begin{equation}
 v({\rm SDP_{box}})\geq v({\rm CR_{box}})=\zeta^*.\label{pf1:1}
 \end{equation}

(b) Next we construct a feasible solution of $({\rm CR_{box}})$ from an optimal solution of $({\rm SOCP_{box}})$. let $Z^*=\left(\begin{array}{cc}\widetilde{Z}^* & z^*\\{z^*}^T & Z^*_{n+1,n+1}\end{array}\right)$ be an optimal solution of $({\rm SOCP_{box}})$,
where $z^*=(Z^*_{1,n+1},\ldots,Z^*_{n,n+1})^T\in\Bbb R^n$.
According to (\ref{socp:3}), we have
\begin{equation}
(Z^*_{j,n+1})^2 \le Z^*_{jj}Z^*_{n+1,n+1},~j=1,\ldots,n.\label{pf1:2}
\end{equation}
It follows from (\ref{socp:2}) (where the set $\chi$ is defined in (\ref{k:box})) that
\begin{equation}
 Z^*_{jj}  \le  Z^*_{n+1,n+1},~j=1,\ldots,n,\label{pf1:3}
\end{equation}
which further implies that
\begin{equation}
 {\rm Tr}(\widetilde{Z}^*)=\sum_{j=1}^nZ_{jj}^*\le
\sum_{j=1}^nZ_{n+1,n+1}^*=nZ_{n+1,n+1}^*.
\label{pf1:4}
\end{equation}
Since $v({\rm SOCP_{box}})\ge v({\rm P_{box}})>0$, we have $Z^*_{n+1,n+1}=\frac{1}{v({\rm SOCP_{box}})}>0$. Define
$\widetilde{x}:= \frac{z^*}{Z^*_{n+1,n+1}}$.
Then, we have
\[
\widetilde{x}_j^2=\frac{(Z^*_{j,n+1})^2}{(Z^*_{n+1,n+1})^2}
\le \frac{ Z^*_{jj}Z^*_{n+1,n+1} }{(Z^*_{n+1,n+1})^2}=
 \frac{ Z^*_{jj} }{Z^*_{n+1,n+1}}\le1,
~j=1,\ldots,n,
\]
where the two inequalities follows from (\ref{pf1:2}) and (\ref{pf1:3}), respectively.  Using (\ref{pf1:4}) and (\ref{socp:1}), we obtain
\[
\omega_i(n-2(x^i)^T\widetilde{x}+\|x^i\|^2)\geq \frac{\omega_i({\rm Tr}(\widetilde{Z}^*)-2(x^i)^Tz^*+\|x^i\|^2Z^*_{n+1,n+1})}{Z^*_{n+1,n+1}}\geq
\frac{1}{Z^*_{n+1,n+1}}.
\]
Then,
$\left(\widetilde{x},\frac{1}{Z^*_{n+1,n+1}}\right)$
is a feasible solution for $({\rm CR_{box}})$.
Consequently, we obtain
 \begin{equation}
v({\rm CR_{box}})\geq \frac{1}{Z_{n+1,n+1}^*}=v({\rm SOCP_{box}}).\label{pf1:5}
 \end{equation}

Combining (\ref{pf1:1}), (\ref{pf1:5}) and the trivial fact $v({\rm SOCP_{box}})\geq v({\rm SDP_{box}})$ \cite{HA13}, we have
\[
v({\rm SOCP_{box}})\geq v({\rm SDP_{box}})\geq v({\rm CR_{box}}) \geq v({\rm SOCP_{box}}),
\]
which completes the proof.
\end{proof}
\begin{rem}
To our knowledge, the result $v({\rm SDP_{box}})=v({\rm SOCP_{box}})$ is also new. Though
it can be observed from the numerical results in \cite{HA13}, the authors there failed to realize that this is always true.
\end{rem}

Below we study the ball case $\chi= \{x\mid \|x\|\leq1\}$. According to the definition (\ref{mu:2}), we have $\mu=1$. The new convex relaxation ${\rm (CR_{\chi})}$ is recast as the following $\emph{second-order cone programming}$ problem:
\begin{eqnarray*}
{\rm (CR_{ball})}~~&\max_{x,~\zeta}&~ \zeta\\
&{\rm s.~t.}& \omega_i(1-2(x^i)^Tx+\|x^i\|^2)\geq \zeta,~i=1,\ldots,m,\\
&& \|x\|\leq 1.
\end{eqnarray*}
It should be noted that the size of ${\rm (CR_{ball})}$ is much smaller than that of
$({\rm SDP_{ball}})$ or $({\rm SOCP_{ball}})$.
For the tightness of ${\rm (CR_{ball})}$,  similar to Theorem \ref{thm00}, we have
the following result.
\begin{thm}\label{thm01}
$v({\rm CR_{ball}})=v({\rm SDP_{ball}})=v({\rm SOCP_{ball}}).$
\end{thm}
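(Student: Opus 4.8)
The plan is to mirror the two-part argument of Theorem \ref{thm00}, replacing the box cone (\ref{k:box}) by the ball cone (\ref{k:ball}) and using $\mu=1$ in place of $\mu=n$. The overall strategy is to prove $v({\rm SDP_{ball}})\ge v({\rm CR_{ball}})$ and $v({\rm CR_{ball}})\ge v({\rm SOCP_{ball}})$, and then close the loop with the trivial inequality $v({\rm SOCP_{ball}})\ge v({\rm SDP_{ball}})$ from \cite{HA13}.

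For the first inequality I would start from an optimal solution $(x^*,\zeta^*)$ of $({\rm CR_{ball}})$, which exists by compactness and satisfies $\zeta^*=v({\rm CR_{ball}})\ge v({\rm P_{ball}})>0$ and $\|x^*\|\le 1$. The rank-one lift $\frac{1}{\zeta^*}\left(\begin{array}{c}x^*\\1\end{array}\right)\left(\begin{array}{cc}{x^*}^T&1\end{array}\right)$ alone is positive semidefinite and already obeys the ball-cone constraint $\sum_{j=1}^n Z_{jj}\le Z_{n+1,n+1}$ because $\|x^*\|^2\le1$; however, since $\mu=1>\|x^*\|^2$ in general, it falls short of the constraints $\omega_i{\rm Tr}(A^i Z)\ge1$. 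The fix is to add a diagonal correction ${\rm Diag}(d_1,\ldots,d_n,0)$ with $d_j\ge0$ and $\sum_{j=1}^n d_j=1-\|x^*\|^2$ (for instance $d_j=(1-\|x^*\|^2)/n$), i.e. to set $\widehat Z:=\frac{1}{\zeta^*}\left(\left(\begin{array}{c}x^*\\1\end{array}\right)\left(\begin{array}{cc}{x^*}^T&1\end{array}\right)+{\rm Diag}(d_1,\ldots,d_n,0)\right)$. A short computation gives ${\rm Tr}(A^i\widehat Z)=\frac{1}{\zeta^*}(\|x^*\|^2-2(x^i)^Tx^*+\|x^i\|^2+\sum_{j=1}^n d_j)=\frac{1}{\zeta^*}(1-2(x^i)^Tx^*+\|x^i\|^2)$, so $\omega_i{\rm Tr}(A^i\widehat Z)\ge1$ follows exactly from the constraints of $({\rm CR_{ball}})$; simultaneously $\sum_{j=1}^n\widehat Z_{jj}=\frac{1}{\zeta^*}(\|x^*\|^2+\sum_{j=1}^n d_j)=\frac{1}{\zeta^*}=\widehat Z_{n+1,n+1}$ makes the cone constraint (\ref{socp:2}) hold (with equality), and $\widehat Z\succeq0$ since it is a sum of positive semidefinite matrices. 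Thus $\widehat Z$ is feasible for $({\rm SDP_{ball}})$ and $v({\rm SDP_{ball}})\ge\zeta^*=v({\rm CR_{ball}})$.

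For the reverse inequality I would follow part (b) of Theorem \ref{thm00} almost verbatim. Writing $Z^*=\left(\begin{array}{cc}\widetilde Z^*&z^*\\{z^*}^T&Z^*_{n+1,n+1}\end{array}\right)$ for an optimal solution of $({\rm SOCP_{ball}})$ and setting $\widetilde x:=z^*/Z^*_{n+1,n+1}$ (with $Z^*_{n+1,n+1}>0$), inequality (\ref{pf1:2}) still holds from (\ref{socp:3}). The only change is that (\ref{pf1:3})--(\ref{pf1:4}) are replaced by the single ball-cone inequality $\sum_{j=1}^n Z^*_{jj}\le Z^*_{n+1,n+1}$ coming from (\ref{socp:2}) with $\mathcal K$ as in (\ref{k:ball}). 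This inequality does double duty: combined with (\ref{pf1:2}) it yields $\|\widetilde x\|^2=\sum_{j=1}^n (Z^*_{j,n+1})^2/(Z^*_{n+1,n+1})^2\le \sum_{j=1}^n Z^*_{jj}/Z^*_{n+1,n+1}\le1$, so $\widetilde x$ lies in the unit ball; and it gives $Z^*_{n+1,n+1}\ge{\rm Tr}(\widetilde Z^*)$, which upgrades (\ref{socp:1}) into $\omega_i(1-2(x^i)^T\widetilde x+\|x^i\|^2)\ge\frac{1}{Z^*_{n+1,n+1}}$. Hence $(\widetilde x,1/Z^*_{n+1,n+1})$ is feasible for $({\rm CR_{ball}})$ and $v({\rm CR_{ball}})\ge1/Z^*_{n+1,n+1}=v({\rm SOCP_{ball}})$. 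Chaining $v({\rm SOCP_{ball}})\ge v({\rm SDP_{ball}})\ge v({\rm CR_{ball}})\ge v({\rm SOCP_{ball}})$ closes the loop.

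The main subtlety is the choice of diagonal correction in the first step. In the box case each diagonal entry is padded up to $Z_{n+1,n+1}$ because the cone (\ref{k:box}) imposes $n$ separate inequalities $Z_{jj}\le Z_{n+1,n+1}$, whereas the ball cone (\ref{k:ball}) imposes the single aggregate inequality $\sum_{j=1}^n Z_{jj}\le Z_{n+1,n+1}$, so the per-coordinate padding cannot be copied directly. The right viewpoint is that the total slack to be distributed is exactly $\mu-\|x^*\|^2=1-\|x^*\|^2$, and distributing it in any nonnegative way across the first $n$ coordinates keeps $\widehat Z\succeq0$ while simultaneously making the cone constraint tight and forcing ${\rm Tr}(A^i\widehat Z)$ to reproduce the $({\rm CR_{ball}})$ inequalities exactly. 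Once this correction is identified, everything else is a routine transcription of the box-case computation.
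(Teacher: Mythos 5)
Your proposal is correct and follows essentially the same route as the paper's proof: the same rank-one lift plus diagonal padding $\frac{1-\|x^*\|^2}{n}I$ (the paper's equation (\ref{yy}) is exactly your suggested instance $d_j=(1-\|x^*\|^2)/n$) for the direction $v({\rm SDP_{ball}})\ge v({\rm CR_{ball}})$, and the same rescaling $\widetilde{x}=z^*/Z^*_{n+1,n+1}$ combined with (\ref{pf1:2}) and the aggregate trace inequality (\ref{pf2:3}) for $v({\rm CR_{ball}})\ge v({\rm SOCP_{ball}})$, closed by the known inequality $v({\rm SOCP_{ball}})\ge v({\rm SDP_{ball}})$. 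Your added remark that any nonnegative distribution of the slack $1-\|x^*\|^2$ over the diagonal works is a harmless (and correct) slight generalization of the paper's uniform choice.
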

\begin{proof}
 Similar to that of Theorem \ref{thm00}, the proof contains two constructive parts.

(a) First we construct a feasible solution of $({\rm SDP_{ball}})$ from an optimal solution of $({\rm CR_{ball}})$.
Since the feasible region of $({\rm CR_{ball}})$ is compact, $({\rm CR_{ball}})$ has an optimal solution, denoted by $(x^*,\zeta^*)$. It is trivial to see that $v({\rm P_{ball}})>0$. Thus, we have $\zeta^*=v({\rm CR_{ball}})\ge v({\rm P_{ball}})>0$.
Define
\begin{eqnarray}
\widehat{Z}:=\frac{1}{\zeta^*}\left(\left(\begin{array}{c} x^*\\ 1\end{array}\right)
\left(\begin{array}{cc} {x^*}^T&1\end{array}\right)+
\left(\begin{array}{cc} \frac{1-\|x^*\|^2}{n}\cdot I&0\\0& 0\end{array}\right)
\right).\label{yy}
\end{eqnarray}
We can show that $\widehat{Z}\succeq 0$ and
\begin{eqnarray*}
&& \omega_i{\rm Tr}(A^i\widehat{Z})= \frac{1}{\zeta^*}(1-2(x^i)^Tx^*+\|x^i\|^2)\geq1,~i=1,\ldots,m,\\
&& \widehat{Z}_{11}+\widehat{Z}_{22}+\ldots+\widehat{Z}_{nn}=\frac{1}{\zeta^*}= \widehat{Z}_{n+1,n+1}.
\end{eqnarray*}
That is, $\widehat{Z}$
is a feasible solution of $({\rm SDP_{ball}})$ (\ref{sdp}), where the set $\mathcal{K}$ is defined in (\ref{k:ball}).
It follows that
\begin{equation}
 v({\rm SDP_{ball}})\geq v({\rm CR_{ball}})=\zeta^*.\label{pf2:1}
 \end{equation}

(b) Next we construct a feasible solution of $({\rm CR_{ball}})$ from an optimal solution of $({\rm SOCP_{ball}})$.
 Let $Z^*=\left(\begin{array}{cc}\widetilde{Z}^* & z^*\\{z^*}^T & Z^*_{n+1,n+1}\end{array}\right)$ be an optimal solution of $({\rm SOCP_{ball}})$,
where $z^*=(Z^*_{1,n+1},\ldots,Z^*_{n,n+1})^T$.
Then, (\ref{pf1:2}) holds. Moreover,
it follows from (\ref{socp:2}) (where the set $\mathcal{K}$ is defined in (\ref{k:ball})) that
\begin{equation}
 {\rm Tr}(\widetilde{Z}^*)=\sum_{j=1}^nZ_{jj}^*\le
 Z_{n+1,n+1}^*.\label{pf2:3}
\end{equation}
Since $v({\rm SOCP_{ball}})\ge v({\rm P_{ball}})>0$, we have $Z^*_{n+1,n+1}=\frac{1}{v({\rm SOCP_{ball}})}>0$. Define
$\widetilde{x} := \frac{z^*}{Z^*_{n+1,n+1}}$.
Using (\ref{pf1:2}) and (\ref{pf2:3}), we can show that
\[
\|\widetilde{x}\|^2=\frac{\sum_{j=1}^n(Z^*_{j,n+1})^2}{(Z^*_{n+1,n+1})^2}
\le \frac{\sum_{j=1}^n Z^*_{jj}Z^*_{n+1,n+1} }{(Z^*_{n+1,n+1})^2}=
 \frac{ \sum_{j=1}^nZ^*_{jj} }{Z^*_{n+1,n+1}}\le1.
\]
Furthermore,  for $i=1,\ldots,m$, it follows from (\ref{pf2:3}) and (\ref{socp:1})  that
\[
\omega_i(1-2(x^i)^T\widetilde{x}+\|x^i\|^2)\geq \frac{\omega_i({\rm Tr}(\widetilde{Z}^*)-2(x^i)^Tz^*+\|x^i\|^2Z^*_{n+1,n+1})}{Z^*_{n+1,n+1}}\geq
\frac{1}{Z^*_{n+1,n+1}}.
\]
Thus,
$\left(\widetilde{x},\frac{1}{Z^*_{n+1,n+1}}\right)$
is a feasible solution for $({\rm CR_{ball}})$.
Consequently, we obtain
 \begin{equation}
v({\rm CR_{ball}})\geq \frac{1}{Z_{n+1,n+1}^*}=v({\rm SOCP_{ball}}).\label{pf2:5}
 \end{equation}

Combining (\ref{pf2:1}), (\ref{pf2:5}) and the fact $v({\rm SOCP_{ball}})\geq v({\rm SDP_{ball}})$ \cite{HA13} yields
\[
v({\rm SOCP_{ball}})\geq v({\rm SDP_{ball}})\geq v({\rm CR_{ball}}) \geq v({\rm SOCP_{ball}}).
\]
The proof is complete.
\end{proof}
\begin{rem}
The reason behind the equivalence as shown in
Theorems \ref{thm00} and \ref{thm01} may be that the constant $\mu$ defined in (\ref{mu:2}) is the smallest concave function majorizing $\|x\|^2$ over either $[-1,1]^n$ or $\{x\mid \|x\|\leq1\}$.
\end{rem}

\section{Computational complexity}
In this section, we first identify a polynomially solvable class of $({\rm P_{ball}})$ and then
prove that $({\rm P_{ball}})$ is generally NP-hard.
\subsection{Polynomial solvable cases}
In a very recent paper \cite{WX}, we have shown that $({\rm P_{ball}})$ enjoys a hidden convexity when $m\le n-1$. This sufficient condition is further improved as follows.
\begin{thm}\label{poly}
Suppose the linear system
\begin{eqnarray}
(x^i)^Tx\le0,~i=1,\ldots,m\label{sufc}
\end{eqnarray}
has a nonzero solution.
Then $v({\rm P_{ball}})=v({\rm CR_{ball}})$ and $({\rm P_{ball}})$
can be solved in polynomial time. Moreover, the above sufficient condition is satisfied when
$
m\le n.
$
\end{thm}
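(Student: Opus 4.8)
The plan is to prove the equality $v({\rm P_{ball}})=v({\rm CR_{ball}})$ by turning an optimizer of the convex relaxation into a feasible point of $({\rm P_{ball}})$ of the same objective value. Since $({\rm CR_{ball}})$ is obtained from the smooth reformulation ${\rm (P_{\chi})'}$ by replacing the term $\|x\|^2$ in (\ref{mu:1}) by its maximum $\mu=1$ over the unit ball, the inequality $v({\rm CR_{ball}})\ge v({\rm P_{ball}})$ holds automatically: if $\|x\|\le 1$ and $\zeta=f(x)$, then $\omega_i(1-2(x^i)^Tx+\|x^i\|^2)\ge \omega_i(\|x\|^2-2(x^i)^Tx+\|x^i\|^2)\ge\zeta$, so $(x,\zeta)$ is feasible for $({\rm CR_{ball}})$. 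Thus only the reverse inequality needs work. The crucial observation is that the relaxation gap for each constraint, namely $\omega_i(1-\|x\|^2)$, vanishes precisely when $\|x\|=1$; hence the relaxation is \emph{exact} at any point on the sphere, and it suffices to move an optimal relaxation solution onto the boundary without decreasing any linearized constraint value.

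First I would take an optimal solution $(x^*,\zeta^*)$ of $({\rm CR_{ball}})$, which exists by compactness, with $\zeta^*=v({\rm CR_{ball}})\ge v({\rm P_{ball}})>0$. If $\|x^*\|=1$, then $x^*$ is already feasible for $({\rm P_{ball}})$ and its objective equals $\min_i\omega_i(1-2(x^i)^Tx^*+\|x^i\|^2)\ge\zeta^*$. Otherwise $\|x^*\|<1$, and I would use the hypothesized nonzero $d$ satisfying $(x^i)^Td\le 0$ for all $i$. Set $\bar{x}=x^*+t^*d$, where $t^*>0$ is the positive root of $\|x^*+td\|^2=1$, which exists since the left-hand side equals $\|x^*\|^2<1$ at $t=0$ and tends to $+\infty$. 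For each $i$,
\[
\omega_i(1-2(x^i)^T\bar{x}+\|x^i\|^2)=\omega_i(1-2(x^i)^Tx^*+\|x^i\|^2)-2t^*\omega_i(x^i)^Td\ge\zeta^*,
\]
because $t^*>0$, $\omega_i>0$ and $(x^i)^Td\le 0$. Since $\|\bar{x}\|=1$, the objective of $({\rm P_{ball}})$ at $\bar{x}$ coincides with $\min_i\omega_i(1-2(x^i)^T\bar{x}+\|x^i\|^2)\ge\zeta^*$, and $\bar{x}$ is feasible. In either case $v({\rm P_{ball}})\ge\zeta^*=v({\rm CR_{ball}})$, which together with the trivial reverse inequality gives $v({\rm P_{ball}})=v({\rm CR_{ball}})$. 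As $({\rm CR_{ball}})$ is a second-order cone program and the recovery of $\bar{x}$ (a linear-system computation for $d$ together with a scalar quadratic for $t^*$) is explicit, $({\rm P_{ball}})$ is solved in polynomial time.

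For the final assertion I would verify that (\ref{sufc}) admits a nonzero solution whenever $m\le n$ by a rank argument on the matrix $X\in\Bbb R^{m\times n}$ whose $i$-th row is $(x^i)^T$. If $\mathrm{rank}(X)<n$, which is automatic when $m<n$, then the null space of $X$ is nontrivial and any nonzero $x$ in it satisfies $Xx=0\le 0$. The only remaining case is $m=n$ with $X$ nonsingular, where I would take $x=-X^{-1}\mathbf{1}\neq 0$, so that $Xx=-\mathbf{1}\le 0$. Hence a nonzero solution of (\ref{sufc}) exists whenever $m\le n$.

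I expect the boundary-pushing step to be the main point: recognizing that exactness of $({\rm CR_{ball}})$ is equivalent to attaining the optimum on the sphere, and that the sign condition $(x^i)^Td\le 0$ is exactly what permits walking to the sphere along $d$ while keeping every linearized constraint, and hence the relaxation value, from dropping. The rank argument covering $m\le n$ is then routine.
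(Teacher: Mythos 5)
Your proposal is correct and is essentially the paper's own argument: both take an optimal $(x^*,\zeta^*)$ of $({\rm CR_{ball}})$ and walk it onto the unit sphere along the hypothesized direction $d$ with $(x^i)^Td\le 0$, the sign condition ensuring that each relaxed constraint value, and hence the objective evaluated on the sphere, never drops below $\zeta^*$; your separate treatment of $\|x^*\|=1$ versus $\|x^*\|<1$, and your null-space/$-X^{-1}\mathbf{1}$ construction for $m\le n$, are only cosmetic variants of the paper's explicit step length $\alpha$ and its orthogonal-complement construction. The one point you gloss over is that, under the general hypothesis, \emph{finding} a nonzero solution of (\ref{sufc}) in polynomial time is not a mere ``linear-system computation'' (the solution set is a polyhedral cone, not a subspace, so Gaussian elimination does not suffice); the paper makes this algorithmic step explicit by solving the $2n$ linear programs $\max \pm x_j$ subject to (\ref{sufc}), whereas your fully explicit construction covers only the $m\le n$ case.
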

\begin{proof}
Since $({\rm CR_{ball}})$ is a second-order cone programming problem, we can solve it in polynomial time and obtain an optimal solution
$(x^*,\zeta^*)$.
For each $j\in\{1,\ldots,n\}$, solve the following absolute-value linear programming problem (or equivalently, two linear programming problems):
\[
v^*_j:=\max_{(\ref{sufc})} |x_j|=\max\left\{\max_{ x_j\ge0~{\rm and}~(\ref{sufc})} x_j,~\max_{ x_j\le0~{\rm and}~(\ref{sufc})} -x_j\right\}.
\]
If $v^*_j=0$ for $j=1,\ldots,n$, the linear system (\ref{sufc}) has no nonzero solution. Otherwise, there is an index $k$ such that $v^*_k>0$. If $v^*_k<+\infty$, let  $\widetilde{x}$ be an optimal solution of $\max_{(\ref{sufc})} |x_k|$. Otherwise, let $\widetilde{x}$ be any nonzero feasible solution of (\ref{sufc}).
Then, $\|\widetilde{x}\|>0$.
Define
\[
\alpha=\frac{-(x^*)^T\widetilde{x}+\sqrt{\|\widetilde{x}\|^2(1-\|x^*\|^2)+
((x^*)^T\widetilde{x})^2}}{\|\widetilde{x}\|^2}.
\]
Clearly, $\alpha\ge0$ and $\|x^*+\alpha \widetilde{x}\|=1$.
Then, we have
\begin{eqnarray*}
f(x^*+\alpha \widetilde{x})
&=&\min_{i=1,\ldots,m}\omega_i   \|x^*+\alpha \widetilde{x}-x^i\|^2 \\
&=&\min_{i=1,\ldots,m}\omega_i \left(\|x^*+\alpha \widetilde{x}\|^2-
2(x^i)^T(x^*+\alpha \widetilde{x})+\|x^i\|^2\right)\\
&=&\min_{i=1,\ldots,m}\omega_i \left(1-
2(x^i)^T(x^*+\alpha \widetilde{x})+\|x^i\|^2\right)\\
&\ge&
\min_{i=1,\ldots,m}\omega_i\left(1-2(x^i)^T(x^*)+\|x^i\|^2\right)\\
&=& v({\rm CR_{ball}})\ge v({\rm P_{ball}}),
\end{eqnarray*}
where the first inequality follows from the definition of $\widetilde{x}$ and the nonnegativity of $\alpha$.
Therefore, $x^*+\alpha \widetilde{x}$ is an optimal solution of $({\rm P_{ball}})$.

Next, we show that the  linear system (\ref{sufc}) has a nonzero solution under the assumption $m\leq n$. 
Since $m-1< n$, the following linear equations
\[
(x^i)^Tx=0,~i=1,\ldots,m-1,
\]
have an infinite number of solutions. Let $\widehat{x}$ be a nonzero solution. Define
\[
\widetilde{x}:=\left\{\begin{array}{ll}
-\left((x^m)^T\widehat{x}\right)\widehat{x},&{\rm if}~(x^m)^T\widehat{x} \neq 0,\\
\widehat{x},&{\rm otherwise.} \end{array}\right.
\]
Then, we can verify that $\widetilde{x}$ is a nonzero solution of (\ref{sufc}).
\end{proof}

As shown in
the following example, our new sufficient condition could not be further improved.
\begin{exam}
Let $n=1,~m=2$ and consider the following instance of ${\rm (P_{ball})}$: 
\begin{eqnarray*}
 &\max_{x^2\leq 1,~x\in\Bbb R^1}& \min\{(x-1)^2,~(x+1)^2\}.
\end{eqnarray*}
Correspondingly, ${\rm(CR_{ball})}$ is rewritten as
$ \max_{x^2\leq 1,~x\in\Bbb R^1}  ~\min\{2-2x,~2+2x\}$.
Our sufficient condition fails to hold, since the corresponding linear system (\ref{sufc})
has no nonzero solution.
It is not difficult to verify  that
\[
v{\rm (P_{ball})}=1 < v({\rm CR_{ball}})=2.
\]
\end{exam}

\subsection{NP-hardness}
In general, we show that the problem ${\rm (P_{ball})}$ remains NP-hard even when $\omega_1=\omega_2=\ldots=\omega_m$. Firstly, we present a useful lemma.

\begin{lem} \label{lem4}
Assume $Q\succ0$. Consider the binary quadratic program
\begin{eqnarray}
{\rm (BQP)}~~&\max& ~x^TQx  \label{bqp:1}\\
&{\rm s.~t.}& x\in\{-1,1\}^n, \nonumber
\end{eqnarray}
and the quadratic constrained quadratic program
\begin{eqnarray}
{\rm (QCQP)}~~&\max_{x,~s}& ~x^TQx+s  \label{qcqp:1}\\
&{\rm s.~t.}& x\in [s-1,1-s]^n, \nonumber\\
&&x^TQx\le 1.\nonumber
\end{eqnarray}
Then, we have
\begin{eqnarray*}
v({\rm QCQP})=\left\{\begin{array}{cl}
2-\frac{1}{\sqrt{v({\rm BQP})}},& v({\rm BQP})\ge 1,\\
1,& v({\rm BQP})< 1.
\end{array}
\right.  
\end{eqnarray*}
\end{lem}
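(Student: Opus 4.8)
The plan is to eliminate the variable $x$ for each fixed value of $s$, reducing $({\rm QCQP})$ to a one-dimensional maximization in $s$ whose piecewise structure will reproduce the claimed formula.

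First I would record the geometry of the feasible set. The constraint $x\in[s-1,1-s]^n$ is nonempty exactly when $s\le 1$, in which case it is the symmetric box $|x_j|\le 1-s$ with $1-s\ge 0$; there is no explicit lower bound on $s$, but the term $+s$ in the objective will force the optimal $s$ to stay bounded below. So $({\rm QCQP})$ becomes $\max_{s\le 1} \bigl(g(s)+s\bigr)$, where $g(s)$ denotes the inner value $\max\{x^TQx : |x_j|\le 1-s,\ x^TQx\le 1\}$.

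The key step is to evaluate this inner problem in closed form. Writing $x=(1-s)y$ with $y\in[-1,1]^n$ gives $x^TQx=(1-s)^2\,y^TQy$. Since $Q\succ 0$, the map $y\mapsto y^TQy$ is convex, so its maximum over the box $[-1,1]^n$ is attained at a vertex, i.e. at some $y\in\{-1,1\}^n$, and equals $v({\rm BQP})$. Hence, ignoring the cap $x^TQx\le 1$, the box-maximum of $x^TQx$ is $(1-s)^2 v({\rm BQP})$. I then have to argue attainability under the cap: if $(1-s)^2 v({\rm BQP})\le 1$ the cap is inactive and the inner value is $(1-s)^2 v({\rm BQP})$; if $(1-s)^2 v({\rm BQP})>1$, scaling the binary maximizer $y^\ast$ by $\lambda=1/\bigl((1-s)\sqrt{v({\rm BQP})}\bigr)\in(0,1)$ keeps $\lambda y^\ast\in[-1,1]^n$ while making $x^TQx$ exactly $1$. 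Thus $g(s)=\min\{(1-s)^2 v({\rm BQP}),\,1\}$. This attainability argument — showing the quadratic cap can be met without leaving the box — is the main point to get right.

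Finally I would maximize $g(s)+s=\min\{(1-s)^2 V,1\}+s$ over $s\le 1$, where $V:=v({\rm BQP})$. Let $s_0:=1-1/\sqrt{V}$ be the threshold where $(1-s)^2 V=1$. For $s\le s_0$ the objective is $1+s$, increasing, with value $2-1/\sqrt{V}$ at $s=s_0$; for $s_0\le s\le 1$ it is $(1-s)^2 V+s$, convex in $s$ (its second derivative is $2V>0$), so its maximum lies at an endpoint, namely $\max\{2-1/\sqrt{V},\,1\}$. Comparing the two candidate values via $2-1/\sqrt{V}\ge 1\iff V\ge 1$ yields $v({\rm QCQP})=2-1/\sqrt{V}$ when $V\ge 1$ and $v({\rm QCQP})=1$ when $V<1$, which is the claim; the two expressions agree at $V=1$. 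The remaining endpoint-and-convexity bookkeeping is routine once the inner value $g(s)$ has been pinned down.
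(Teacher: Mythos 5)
Your proposal is correct and follows essentially the same route as the paper's proof: reduce the inner maximization over the box $[s-1,1-s]^n$ to $(1-s)^2 v({\rm BQP})$ using the fact that a convex quadratic attains its box maximum at a vertex, then solve the resulting one-dimensional piecewise problem $\max_{s\le 1}\{\min\{(1-s)^2 v({\rm BQP}),1\}+s\}$ by convexity/endpoint analysis. In fact your write-up is slightly more careful than the paper's, since you make explicit the attainability of the cap value $x^TQx=1$ inside the box (via scaling the binary maximizer by $\lambda=1/((1-s)\sqrt{v({\rm BQP})})$), a step the paper passes over implicitly.
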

\begin{proof}
Since $Q\succ0$, ${\rm (BQP)}$ is equivalent to the following continuous relaxation:
\begin{eqnarray*}
{\rm (QP)}~~&\max& ~x^TQx \\
&{\rm s.~t.}& x\in [-1,1]^n,
\end{eqnarray*}
in the sense that $v({\rm BQP})=v({\rm QP})$. Besides, we have $v({\rm BQP})>0$.

According to the definition of (QCQP), it follows from $s-1\le 1-s$ that  $s\le 1$. Moreover, it is trivial to verify that
\[
\max_{x\in [s-1,1-s]^n} x^TQx =(1-s)^2\cdot v({\rm QP})=(1-s)^2\cdot v({\rm BQP}).
\]
Therefore, we have
\begin{eqnarray}
&&v({\rm QCQP})\nonumber\\
&=&\max_{s\le1}\left\{ \min\{(1-s)^2\cdot v({\rm BQP}),~1\}+s\right\}\nonumber\\
&=&\max\left\{
\max_{1-\frac{1}{\sqrt{v({\rm BQP})}}\le s\le1} \left\{ (1-s)^2\cdot v({\rm BQP})+s\right\},~
\max_{1-\frac{1}{\sqrt{v({\rm BQP})}}\ge s } \left\{  1+s\right\}
\right\}\nonumber\\
&=&\max\left\{ \max\left\{1+1-\frac{1}{\sqrt{v({\rm BQP})}},1\right\},~
  1+1-\frac{1}{\sqrt{v({\rm BQP})}}
\right\}\label{np:3}\\
&=& \left\{\begin{array}{cl}
2-\frac{1}{\sqrt{v({\rm BQP})}},& v({\rm BQP})\ge 1,\\
1,& v({\rm BQP})< 1,
\end{array}
\right. \nonumber
\end{eqnarray}
where Equation (\ref{np:3}) holds since the strictly convex function $(1-s)^2\cdot v({\rm BQP})+s$ attains its maximum at a vertex.
\end{proof}

 \begin{thm} \label{thm32}
The ball-constrained problem ${\rm (P_{ball})}$ is NP-hard.
\end{thm}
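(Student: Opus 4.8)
The plan is to reduce the NP-hard problem $({\rm BQP})$ of Lemma~\ref{lem4} to an instance of $({\rm P_{ball}})$, using $({\rm QCQP})$ as a bridge. Since maximizing $x^TQx$ over $\{-1,1\}^n$ with $Q\succ0$ is well known to be NP-hard (it contains max-cut as a special case), and Lemma~\ref{lem4} gives a closed-form link $v({\rm QCQP})=2-1/\sqrt{v({\rm BQP})}$ whenever $v({\rm BQP})\ge1$, it suffices to exhibit an instance of $({\rm P_{ball}})$ whose optimal value determines $v({\rm QCQP})$, and hence $v({\rm BQP})$. The natural route is to show that $({\rm P_{ball}})$ with equal weights can encode a problem of the form $({\rm QCQP})$.

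First I would recall from the introduction that $({\rm P_{ball}})$ with the common weight $\omega_1=\cdots=\omega_m=\omega$ reads $\max_{\|x\|\le1}\min_i \omega\|x-x^i\|^2$, and expand the squared distance via $\|x-x^i\|^2=\|x\|^2-2(x^i)^Tx+\|x^i\|^2$. The key design step is to choose the data points $x^1,\dots,x^m$ so that the envelope $\min_i(\cdot)$ reproduces the box-type constraints $x\in[s-1,1-s]^n$ and the quadratic term appearing in $({\rm QCQP})$. Concretely, I would pick the points in $\pm$ pairs along the coordinate directions (this is why the hardness holds already for $m\ge 2n$, as announced in the introduction), so that each inner term $\omega\|x-x^i\|^2$ controls one facet of a box; the minimum over the $2n$ points then behaves like a scaled $\ell_\infty$-type penalty whose active constraint pins down the scalar $s$. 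After the change of variables that converts $\|x\|\le1$ together with these linear terms into the pair of constraints $x\in[s-1,1-s]^n$ and $x^TQx\le1$, the objective should match $x^TQx+s$ up to an affine transformation, so that $v({\rm P_{ball}})$ is an explicit monotone function of $v({\rm QCQP})$.

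The remaining work is to verify the reduction is polynomial and value-preserving: the constructed instance has $m=2n$ points (or a constant multiple), each computable in polynomial time from $Q$, and the map $v({\rm QCQP})\mapsto v({\rm P_{ball}})$ together with the formula in Lemma~\ref{lem4} lets us recover $v({\rm BQP})$ exactly. Since $v({\rm BQP})$ cannot be computed in polynomial time unless ${\rm P=NP}$, neither can $v({\rm P_{ball}})$, giving NP-hardness.

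The hard part will be engineering the data points so that the piecewise-minimum objective of $({\rm P_{ball}})$ collapses \emph{exactly} to the structured form $x^TQx+s$ subject to $x\in[s-1,1-s]^n$ and $x^TQx\le1$, rather than merely to something similar; in particular, one must confirm that at optimality the intended constraints are the active ones and that the auxiliary scalar $s$ emerges naturally from the envelope of the distance functions. I expect this encoding, together with checking that $Q\succ0$ can be enforced (e.g.\ by a diagonal shift that keeps the reduction polynomial and keeps $v({\rm BQP})\ge1$ so the nontrivial branch of Lemma~\ref{lem4} applies), to be the crux of the argument, with the value-preservation bookkeeping being routine once the construction is fixed.
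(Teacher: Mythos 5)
Your high-level plan coincides with the paper's: use Lemma~\ref{lem4} as a bridge between a binary quadratic program and $({\rm QCQP})$, then realize $({\rm QCQP})$ as an instance of $({\rm P_{ball}})$ with $m=2n$ points arranged in $\pm$ pairs via a linear change of variables. However, the step you explicitly defer (``the hard part will be engineering the data points\ldots'') is precisely where all the mathematical content of the theorem lies, and the route you suggest for it --- starting from an \emph{arbitrary} NP-hard instance of $({\rm BQP})$, e.g.\ from max-cut --- cannot work as stated. Here is the obstruction. If the change of variables is $x=2Ly$ and the $({\rm P_{ball}})$ data points are $\pm L_i^T$ (the rows of $L$), then $\min_i\{\|y-L_i^T\|^2,\ \|y+L_i^T\|^2\}=y^Ty+\min_i\{\|L_i\|^2-2L_iy,\ \|L_i\|^2+2L_iy\}$, so the envelope collapses to the uniform box system $\pm 2L_iy\le 1-s$ \emph{only if every row satisfies} $\|L_i\|^2=1$; simultaneously, matching the objective $x^TQx=y^Ty$ forces $L^T(4Q)L=I$, i.e.\ $LL^T=(4Q)^{-1}$. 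Combining the two requirements, the diagonal of $Q^{-1}$ must be identically equal to $4$. A generic $Q\succ 0$ (in particular a shifted max-cut Laplacian) violates this, and you cannot repair it by rescaling the rows of $L$, since that destroys $LL^T=(4Q)^{-1}$ and hence the identity $x^TQx=y^Ty$.

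The paper resolves exactly this difficulty by reducing from the partition problem instead: with integer data $a$, it takes $Q=\frac{1}{4}(\Lambda-aa^T)$ with $\Lambda$ diagonal, so that $v({\rm BQP}^*)=\frac{1}{4}{\rm Tr}(\Lambda)-\frac{1}{4}\min_{x\in\{-1,1\}^n}(a^Tx)^2$ encodes partition, and the unit-row condition becomes a solvable system of scalar equations in $\Lambda$. Existence of such $\Lambda$ is proved through a one-parameter family: $L=\Lambda^{-1/2}\left(I+\beta(t)\Lambda^{-1/2}aa^T\Lambda^{-1/2}\right)$, the Sherman--Morrison formula giving $LL^T=(\Lambda-aa^T)^{-1}$, and an intermediate-value argument producing $t^*\in(0,1)$ with $g(t^*)=0$, which yields $\|L_i\|=1$ for all $i$ and $Q\succ0$. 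Even then, $t^*$ is only polynomially approximable by bisection, so a further perturbation (gap) argument with rational approximations of $\Lambda$ and $L$ (Appendix A of the paper) is needed to make the reduction rigorous --- this exploits the integrality of $a$, i.e.\ that $\min(a^Tx)^2$ is either $0$ or at least $1$, which is another reason partition is the right starting problem. None of this is routine bookkeeping; it is the proof. As written, your proposal identifies the correct architecture but leaves the theorem unproved, and its announced starting point (an arbitrary positive definite max-cut-type Hessian) would have to be abandoned to carry it out.
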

\begin{proof}
For readability, we present here the sketch of the proof and leave the tedious supplementary proof in Appendix A.

Given $a=(a_1,\ldots,a_n)^T$ with integer entries,  the partition problem (PP) is to ask whether the following equation
\begin{equation}
a^Tx=0,~x\in\{-1,1\}^n \label{pp}
\end{equation}
has a solution. The NP-hardness of (PP) can be found in \cite{GJ}.  Without loss of generality, we assume $a_i\neq0$ for $i=1,\ldots,n$.

For any $t ~(0<t<1)$, define
\begin{eqnarray}
\beta(t)&=& \frac{ 1-\sqrt{1-t}}{t\sqrt{1-t}},\\
\gamma(t)&=& 2\beta(t)+t\beta(t)^2.
\end{eqnarray}
The following equations
\[
\left\{\begin{array}{rcl}
\Lambda_{ii}(t)^{-1}+a_i^2\gamma(t)\Lambda_{ii}(t)^{-2} &=&1,~i=1,\ldots,n\\
\sum_{i=1}^n a_i^2\Lambda_{ii}(t)^{-1}&=&t
\end{array}\right.
\]
are equivalent to
\[
\left\{\begin{array}{rcl}
\Lambda_{ii}(t)&=&\frac{1}{2}+\frac{1}{2}
\sqrt{1+4a_i^2\gamma(t)},~i=1,\ldots,n,\\
g(t)&=&0,
\end{array}\right.
\]
where
\[
g(t)=t-\sum_{i=1}^n \frac{2a_i^2}{1+\sqrt{1+4a_i^2 \left(2\frac{ 1-\sqrt{1-t}}{t\sqrt{1-t}} +\left(\frac{ 1-\sqrt{1-t}}{t\sqrt{1-t}}\right)^2t\right)}}.
\]
Since $
\lim_{t\rightarrow 0^+}\frac{ 1-\sqrt{1-t}}{t\sqrt{1-t}}=\frac{1}{2}$ and
$\lim_{t\rightarrow 1^-}\frac{ 1-\sqrt{1-t}}{t\sqrt{1-t}}=+\infty$,
  we have
\[
\lim_{t\rightarrow 0^+}g(t)=-\frac{1}{2}
\sum_{i=1}^n \left(\sqrt{1+4 a_i^2}-1\right)<0,~
\lim_{t\rightarrow 1^-}g(t)=1>0.
\]
Therefore, there is a $t^*\in(0,1)$ such that $g(t^*)=0$. Notice that the root $t^*$ can be approximately obtained in polynomial time by using the bisection method.
 For readability, in the following proof, we use the exact value $t^*$. A supplementary proof based on an approximated $t^*$ is presented in Appendix A. Define
 \begin{eqnarray}
 \Lambda&:=&{\rm Diag}(\Lambda_{11}(t^*),\ldots,\Lambda_{nn}(t^*)),\label{Lmbd}\\
L&:=&\Lambda^{-1/2}\left(I+\beta(t^*) \Lambda^{-1/2}aa^T\Lambda^{-1/2}\right),
\label{LL}
\end{eqnarray}
and $L_i$ be the $i$-th row of $L$ for $i=1,\ldots,n$. Then, we can verify that
\[
\|L_i\|^2=\Lambda_{ii}(t^*)^{-1}+a_i^2\gamma(t^*)\Lambda_{ii}(t^*)^{-2}=1,
~i=1,\ldots,n.
\]
Moreover, we have
\[
LL^T=\Lambda^{-1}+\frac{\Lambda^{-1}aa^T\Lambda^{-1}}
{1-a^T\Lambda^{-1}a}=(\Lambda-aa^T)^{-1},
\]
where the last equality follows from the Sherman-Morrison formula \cite{Hag}.

Denote by ($\rm  BQP^*$) and ($\rm QCQP^*$) the problems obtained by replacing the Hessiam matrix $Q$ with  $\frac{1}{4}\left( \Lambda-aa^T\right)$ in (BQP) (\ref{bqp:1}) and (QCQP) (\ref{qcqp:1}), respectively. Without loss of generality, we assume $n\ge 5$. Then, we have
\[
\frac{1}{4}{\rm Tr}(\Lambda)> \frac{n}{4}>1.
\]
Therefore, (PP) (\ref{pp}) has a solution if and only if $v{\rm(BQP^*)}= \frac{1}{4}{\rm Tr}(\Lambda)(>1)$.
According to Lemma \ref{lem4}, it is equivalent to $v{\rm(QCQP^*)}=2-\frac{2}{\sqrt{{\rm Tr}(\Lambda)}}(>1)$. Thus,
($\rm QCQP^*$) remains NP-hard.

By introducing $y=\frac{1}{2}L^{-1}x$, we can reformulate
(${\rm QCQP^*}$) as
\begin{eqnarray*}
 &\max_{y,~s}& ~y^Ty+s \\
&{\rm s.~t.}& 2L_iy\le 1-s,~i=1,\ldots,n,\\
& & -2L_iy\le 1-s,~i=1,\ldots,n,\\
&&y^Ty\le 1.\nonumber
\end{eqnarray*}
It is further equivalent to
\begin{eqnarray*}
\max_{y^Ty\le 1} \left\{ \min_{i=1,\ldots,n} \left\{ \|y-L_i^T\|^2,~ \|y+L_i^T\|^2 \right\}\right\},
\end{eqnarray*}
 which corresponds to
 a special case of ${\rm (P_{ball})}$ with $m=2n$.
Therefore,  ${\rm (P_{ball})}$ is NP-hard.
\end{proof}

\section{Approximation algorithms}
In this section, we propose a new simple approximation algorithm for solving ${\rm (P_{ball})}$ and then establish the approximation bound of $\frac{1-O(\sqrt{\ln(m)/n})}{2}$.

\subsection{Approximation algorithms for (${\rm P_{\chi}}$)}\label{sec}
We show that the following general approximation algorithm for $({\rm P_{\chi}})$ proposed in \cite{HA13} should be slightly fixed
and then it can be significantly simplified when $\chi=[-1,1]^n$.

~

\begin{center}
\fbox{\shortstack[l]{
{\bf An approximation algorithm for $({\rm P_{\chi}})$}\\
1.~ Input $\rho\in(0,1)$ and $x^i$ for $i=1,\ldots,m$. Let $\alpha=\sqrt{2\ln(m/\rho)}$. \\
2.~
 Solve $({\rm SDP_{\chi}})$ (or $({\rm SOCP_{\chi}})$)  and return the optimal solution $Z^*\in S^{n+1}$.\\~~~~~Set $b^i=(\sqrt{Z^*_{11}}x^i_1,\ldots,\sqrt{Z^*_{nn}}x^i_n)^T$  for $i=1,\ldots,m$.\\
3.~    Repeatedly generate $\xi=(\xi_1,\ldots,\xi_n)^T$ with independent $\xi_i$ taking the \\~~~~~value $\pm1$ with equal probability until  $(b^i)^T\xi<\alpha\|b^i\|$ for $i=1,\ldots,m$.\\
4.~ Output $
\widetilde{x}=\left(\frac{\sqrt{{Z^*_{11}}}\xi_1}{\sqrt{{Z^*_{n+1,n+1}}}},
\frac{\sqrt{{Z^*_{22}}}\xi_2}{\sqrt{{Z^*_{n+1,n+1}}}},
\ldots,\frac{\sqrt{{Z^*_{nn}}}\xi_n}{\sqrt{{Z^*_{n+1,n+1}}}}\right)^T$.
}}
\end{center}

~

It is trivial to see that the above algorithm breaks down when there is an index $k\in\{1,\ldots,m\}$ such that $\|x^k\|=0$.
Fortunately, Step 3 of the above algorithm can be revised to generate $\xi$  by repeated random sampling such that
\[
(b^i)^T\xi<\alpha\|b^i\|, ~{\rm for}~ i\in I:=\{i\in\{1,\ldots,m\}\mid~\|x^i\|>0\}.
\]
The existence of $\xi$ is guaranteed by the following inequality:
\[
{\rm Pr}\left((b^i)^T\xi<\alpha\|b^i\|,~i\in I \right)\ge 1-\rho>0,
\]
which follows from the following well-known result:
\begin{thm} {\rm \cite[Lemma A.3]{Ben2002}}\label{thm0}
Let $\xi\in \{-1,1\}^n$ be a random vector, componentwise independent, with
\[
{\rm Pr}(\xi_j=1)={\rm Pr}(\xi_j=-1)=\frac{1}{2},~\forall j=1,\ldots,n.
\]
Let $b\in \Bbb R^n$ and $\|b\|> 0$. Then for any $\alpha>0$,
\[
{\rm Pr}(b^T\xi\geq \alpha\|b\|)\leq e^{-\alpha^2/2}.
\]
\end{thm}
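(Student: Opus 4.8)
The plan is to use the exponential (Chernoff) moment method. First I would fix an arbitrary $\lambda>0$ and apply Markov's inequality to the nonnegative random variable $e^{\lambda b^T\xi}$, which gives
\[
{\rm Pr}\left(b^T\xi\ge\alpha\|b\|\right)={\rm Pr}\left(e^{\lambda b^T\xi}\ge e^{\lambda\alpha\|b\|}\right)\le e^{-\lambda\alpha\|b\|}\,E\left[e^{\lambda b^T\xi}\right].
\]
The reason for exponentiating is that, since the coordinates $\xi_j$ are independent, the moment generating function factorizes into a product of scalar factors, and each factor is explicit:
\[
E\left[e^{\lambda b^T\xi}\right]=\prod_{j=1}^nE\left[e^{\lambda b_j\xi_j}\right]=\prod_{j=1}^n\left(\frac{1}{2}e^{\lambda b_j}+\frac{1}{2}e^{-\lambda b_j}\right)=\prod_{j=1}^n\cosh(\lambda b_j).
\]

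Next I would invoke the elementary scalar inequality $\cosh(u)\le e^{u^2/2}$, valid for all real $u$. This follows by comparing the power series $\cosh(u)=\sum_{k\ge0}u^{2k}/(2k)!$ and $e^{u^2/2}=\sum_{k\ge0}u^{2k}/(2^k k!)$ term by term, using $(2k)!\ge 2^k k!$ (the first $k$ factors of $(2k)!$ above $k!$ are each at least $2$). Applying this to each factor yields the Gaussian-type bound
\[
E\left[e^{\lambda b^T\xi}\right]\le\prod_{j=1}^n e^{\lambda^2 b_j^2/2}=e^{\lambda^2\|b\|^2/2},
\]
so that ${\rm Pr}\left(b^T\xi\ge\alpha\|b\|\right)\le\exp\left(-\lambda\alpha\|b\|+\frac{1}{2}\lambda^2\|b\|^2\right)$ holds for every $\lambda>0$.

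Finally I would optimize over the free parameter $\lambda$. The exponent is a convex quadratic in $\lambda$, minimized at $\lambda=\alpha/\|b\|>0$, where it takes the value $-\alpha^2+\frac{1}{2}\alpha^2=-\frac{1}{2}\alpha^2$. Substituting this choice gives the claimed tail bound ${\rm Pr}\left(b^T\xi\ge\alpha\|b\|\right)\le e^{-\alpha^2/2}$. I do not anticipate a genuine obstacle here: this is the classical Hoeffding--Chernoff argument for Rademacher sums, and the only nonroutine ingredient is the scalar inequality $\cosh(u)\le e^{u^2/2}$. The one hypothesis that must be used explicitly is $\|b\|>0$, since the Chernoff optimization divides by $\|b\|$ and would otherwise leave the optimal $\lambda$ undefined.
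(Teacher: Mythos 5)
Your proof is correct, but there is nothing in the paper to compare it against: the paper does not prove this statement, it quotes it verbatim as a known result (Lemma A.3 of Ben-Tal, Nemirovski and Roos \cite{Ben2002}) and uses it as a black box to guarantee termination of the sampling step in the approximation algorithm. The argument you give is the classical Chernoff--Hoeffding proof of the Rademacher tail bound, and every step checks out: Markov's inequality applied to $e^{\lambda b^T\xi}$ for $\lambda>0$, factorization of the moment generating function into $\prod_{j=1}^n\cosh(\lambda b_j)$ by componentwise independence, the scalar estimate $\cosh(u)\le e^{u^2/2}$ via the term-by-term comparison $(2k)!\ge 2^k k!$ (valid since $(k+1)(k+2)\cdots(2k)\ge 2^k$ for $k\ge 1$, with equality of the $k=0$ terms), and finally optimization of the exponent $-\lambda\alpha\|b\|+\tfrac12\lambda^2\|b\|^2$ at $\lambda=\alpha/\|b\|$, which indeed requires the hypothesis $\|b\|>0$ exactly as you note. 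In effect your proposal supplies a self-contained proof where the paper relies on a citation, which is a perfectly acceptable (and the standard) way to establish this lemma.
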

It is not difficult to verify that the approximation bound established in \cite{HA13} is still satisfied for the fixed algorithm.
\begin{thm} {\rm\cite[Theorem 3]{HA13}}\label{thm03}
Let $\gamma_1^*$ be defined in (\ref{gamma:0}).
Then, for the solution $\widetilde{x}$ returned by the above (fixed) algorithm,  we have
\[
v({\rm SDP_{\chi}})\geq v({\rm P_{\chi}})\ge f(\widetilde{x})\geq \frac{1-\alpha\sqrt{\gamma_1^*}}{2}v({\rm SDP_{\chi}})\geq \frac{1-\alpha\sqrt{\gamma_1^*}}{2}v({\rm P_{\chi}}).
\]
\end{thm}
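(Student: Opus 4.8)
The plan is to reduce everything to the central inequality $f(\widetilde{x})\ge \frac{1-\alpha\sqrt{\gamma_1^*}}{2}\,v({\rm SDP_\chi})$, since the two outer estimates are then automatic: $v({\rm SDP_\chi})\ge v({\rm P_\chi})\ge f(\widetilde{x})$ holds because $({\rm SDP_\chi})$ is a relaxation and $\widetilde{x}$ is feasible, while the last inequality is obtained by multiplying $v({\rm SDP_\chi})\ge v({\rm P_\chi})$ by the factor $\frac{1-\alpha\sqrt{\gamma_1^*}}{2}$ (nonnegative in the only interesting regime $\alpha\sqrt{\gamma_1^*}<1$; otherwise the bound is vacuous). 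Before anything else I would check the feasibility $\widetilde{x}\in\chi$ needed for $v({\rm P_\chi})\ge f(\widetilde{x})$: since $\xi_j^2=1$ gives $\widetilde{x}_j^2=Z^*_{jj}/Z^*_{n+1,n+1}$, the vector $(\widetilde{x}_1^2,\ldots,\widetilde{x}_n^2,1)$ equals $(Z^*_{11},\ldots,Z^*_{nn},Z^*_{n+1,n+1})$ scaled by $1/Z^*_{n+1,n+1}>0$, which lies in $\mathcal{K}$ by (\ref{socp:2}) and the homogeneity of the convex cone $\mathcal{K}$. For the rest I abbreviate $v:=v({\rm SDP_\chi})=1/Z^*_{n+1,n+1}$, $S:=\sum_{j=1}^n Z^*_{jj}$, $a:=\sqrt{vS}=\|\widetilde{x}\|$, $t_i:=\|x^i\|$ and $\kappa:=\alpha\sqrt{\gamma_1^*}$.

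Next I would expand the objective at the rounded point. Using $\xi_j^2=1$ one obtains $\omega_i\|\widetilde{x}-x^i\|^2=\omega_i(vS-2\sqrt{v}\,(b^i)^T\xi+\|x^i\|^2)$. The conditioning enforced in Step 3, namely $(b^i)^T\xi<\alpha\|b^i\|$ for every $i\in I$, together with the elementary bound $\|b^i\|^2=\sum_j Z^*_{jj}(x^i_j)^2\le(\max_j Z^*_{jj})\|x^i\|^2=\gamma_1^* S\,t_i^2$ read straight off the definition (\ref{gamma:0}), then yields $\omega_i\|\widetilde{x}-x^i\|^2>\omega_i(a^2-2\kappa a\,t_i+t_i^2)$. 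Since $a^2-2\kappa a t_i+t_i^2\ge(a-t_i)^2\ge0$ whenever $\kappa\le1$, the inner quadratic is nonnegative, which will matter when I multiply by a lower bound for $\omega_i$.

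The crux is converting the surviving factor $\omega_i$ into something expressed purely through $v,a,t_i$, and this is where I expect the real work to lie. I would use the relaxation feasibility $\omega_i{\rm Tr}(A^iZ^*)\ge1$, i.e. $\omega_i(S-2(x^i)^Tz^*+t_i^2/v)\ge1$. The off-diagonal block $z^*$ is not controlled by the rounding, so I discard it with a Cauchy--Schwarz estimate: from the $2\times2$ minor bound $(Z^*_{j,n+1})^2\le Z^*_{jj}Z^*_{n+1,n+1}$ (positive semidefiniteness in the SDP case, constraint (\ref{socp:3}) in the SOCP case) and $Z^*_{jj}\ge0$ one gets $|(x^i)^Tz^*|\le\sqrt{Z^*_{n+1,n+1}}\,\|x^i\|\sqrt{S}=\sqrt{S/v}\,t_i$, hence ${\rm Tr}(A^iZ^*)\le(\sqrt{S}+t_i/\sqrt{v})^2=(a+t_i)^2/v$ and therefore $\omega_i\ge v/(a+t_i)^2$. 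Multiplying the nonnegative quadratic by this bound gives $\omega_i\|\widetilde{x}-x^i\|^2> v\cdot h(t_i)$, where $h(t)=\frac{a^2-2\kappa a t+t^2}{(a+t)^2}$. A one-variable computation (substituting $t=a\tau$ reduces $h$ to a function of $\tau$ and $\kappa$ alone) shows $h'$ changes sign exactly at $t=a$, so the minimum of $h$ over $t\ge0$ is $h(a)=\frac{2-2\kappa}{4}=\frac{1-\kappa}{2}$, giving $\omega_i\|\widetilde{x}-x^i\|^2>\frac{1-\kappa}{2}v$ for all $i\in I$.

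Finally I would treat the indices $i\notin I$, i.e. $x^i=0$, which is precisely the degenerate case that necessitated fixing Step 3. Here $\omega_i\|\widetilde{x}-x^i\|^2=\omega_i a^2$, and since the $z^*$- and $x^i$-terms vanish the constraint collapses to $\omega_i S\ge1$, so $\omega_i a^2\ge vS/S=v\ge\frac{1-\kappa}{2}v$. Taking the minimum over all $i$ then produces $f(\widetilde{x})\ge\frac{1-\kappa}{2}v$, closing the chain. The main obstacle, as indicated above, is the third step: one must recognize that the numerator coming from the random rounding plus the tail estimate and the denominator coming from the relaxation constraint after eliminating $z^*$ assemble into a single rational function whose worst case over $t_i\ge0$ occurs exactly at $t_i=\|\widetilde{x}\|$ and equals precisely $\frac{1-\kappa}{2}$; the delicate point is that these two a priori independent estimates become extremal at the same configuration, which is what makes the constant come out clean rather than with a spurious loss.
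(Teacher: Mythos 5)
Your proof is correct, but there is little in the paper to compare it against line by line: the paper never proves Theorem \ref{thm03}. It quotes the result from \cite[Theorem 3]{HA13} and merely remarks that ``it is not difficult to verify'' that the bound survives the fix of Step 3 (restricting the rounding condition to the index set $I$). So your argument is a genuine reconstruction, and a faithful one: it follows the same template the paper itself uses for its new ball algorithm in Theorem \ref{thm05}. In both arguments the quantity $\omega_i\|\widetilde{x}-x^i\|^2$ is bounded below via the rounding/tail condition, the weight $\omega_i$ is bounded below via the relaxation constraint after eliminating the cross term by Cauchy--Schwarz (there $\zeta^*\le \omega_i(1+\|x^i\|)^2$, here $1\le \omega_i(a+t_i)^2/v$), and the two bounds are assembled into a one-variable ratio whose worst case yields the constant $\frac{1-\kappa}{2}$, $\kappa=\alpha\sqrt{\gamma_1^*}$. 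Your minimization of $h(t)=\frac{a^2-2\kappa a t+t^2}{(a+t)^2}$ at $t=a$ is the clean generalization of the paper's elementary inequality $2\|x^i\|\le 1+\|x^i\|^2$, which is what Theorem \ref{thm05} uses in the special case $a=\|\widetilde{x}\|=1$; and your separate treatment of the indices with $x^i=0$ (where $\omega_i S\ge 1$ yields $\omega_i\|\widetilde{x}\|^2\ge v$) is exactly the extra step the fixed algorithm requires, which the paper asserts but never writes out. Two caveats, neither a gap in your reasoning: the quadratic $a^2-2\kappa a t_i+t_i^2$ must be nonnegative before you multiply it by the lower bound on $\omega_i$, which you correctly secure for $\kappa\le 1$ (for $\kappa>1$ the third inequality is trivial since $f\ge 0$); and the final inequality of the theorem genuinely requires $\kappa\le 1$ to have the stated direction, a defect inherited from the statement in \cite{HA13} rather than from your proof.
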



Now, we focus on the special case $({\rm P_{\rm box}})$. According to the construction of the optimal  solution of $({\rm SDP_{\rm box}})$ defined in (\ref{zz}),  we have
\begin{equation}
Z_{11}^*=Z_{22}^*=\ldots=Z_{nn}^*=Z_{n+1,n+1}^*. \label{Z:eq}
\end{equation}
It turns out that  $\gamma_1^*$
defined in (\ref{gamma:0})  can be simplified to $\frac{1}{n}$. The solution structure (\ref{Z:eq}) has also been shown in Proposition 2.10 in \cite{HA13}. However, the authors there seem not to realize that
the equalities (\ref{Z:eq}) can be used to  simplify the above approximation algorithm for $({\rm P_{box}})$. Actually, as seeing in the following, we do not have to solve any convex relaxation for $({\rm P_{box}})$.

~

\begin{center}
\fbox{\shortstack[l]{
{\bf A simplified approximation algorithm for $({\rm P_{box}})$}\\
1.~ Input $\rho\in(0,1)$ and $x^i$  for $i=1,\ldots,m$. Let $\alpha=\sqrt{2\ln(m/\rho)}$. \\
 2.~ Repeatedly generate $\xi\in\Bbb R^n$ with independent $\xi_i$ taking the value $\pm1$ with \\~~~~~equal probability until  $(x^i)^T\xi<\alpha\|x^i\|$ for $i\in\{1,\ldots,m\}\setminus\{k\mid~\|x^k\|=0\}$.\\
3.~ Output $\widetilde{x}=\xi.$
}}
\end{center}

~

Unfortunately, the above simplified approach for $({\rm P_{box}})$ can not be similarly extended to $({\rm P_{ball}})$, since the equalities (\ref{Z:eq}) no longer hold true for the ball-constrained case as shown in the following example.
\begin{exam}
Let $n=2,~m=3$, $x^1=\left(\begin{array}{c}1\\2\end{array}\right)$, $x^2=\left(\begin{array}{c}2\\3\end{array}\right)$,
$x^3=\left(\begin{array}{c}1\\5\end{array}\right)$, and $\omega_1=\omega_2=\omega_3=1$. Solving the SDP relaxation ${\rm (SDP_{ball})}$ (\ref{sdp}) by SDPT3 within CVX \cite{GrB} yields the optimal value $\frac{1}{v{\rm (SDP_{ball})}}\approx0.0955$ and an optimal solution
\[
Z^*=\left(\begin{array}{ccc}
 0.0191 &   0.0382&   -0.0427\\
    0.0382&    0.0764&   -0.0854\\
   -0.0427 &  -0.0854 &   0.0955
\end{array}\right).
\]
Then, we have $Z_{11}^*<Z_{22}^*<Z_{33}^*$. Moreover, since solving a modified model of ${\rm (SDP_{ball})}$ with an additional constraint $Z_{11}=Z_{22}$ yields
a larger optimal objective value $0.0976$,  
the equalities (\ref{Z:eq}) can not hold for  any  optimal solution of ${\rm (SDP_{ball})}$.

Besides, according to the definition (\ref{gamma:0}), we have
$
\gamma_1^*=\frac{0.0764}{ 0.0191+0.0764}=0.8$.
It follows that  the approximation bound established in Theorem \ref{thm03} is quite poor for this example as it is negative:
\[
\frac{1-\alpha\sqrt{\gamma_1^*}}{2}=\frac{1-\sqrt{1.6\ln(3/\rho)}}{2}
\le\frac{1-\sqrt{1.6\ln(3)}}{2}<-0.1629<0.
\]
\end{exam}

\subsection{{Tail estimation of uniformly sampling on the sphere}}
Similar to Theorem \ref{thm0}, we establish a tail estimation theory for uniformly sampling over the sphere of radius $\sqrt{n}$.
It will serve  as a key lemma in the analysis of our new approximation algorithm presented in the next subsection.

\begin{thm}\label{lem1}
Let $b=\{b_1,\ldots,b_n\}\in \Bbb R^n$ and $\|b\|>0$.
Let $\eta=(\eta_1,\ldots,\eta_n)^T$ be
uniformly
distributed over the sphere of radius $\sqrt{n}$ in $\Bbb R^n$ (that is, $\|\eta\|=\sqrt{n}$). Then, for any $\alpha>0$ and any $n\ge 2$, we have
\begin{equation}
{\rm Pr}\left( b^T\eta \geq \alpha\|b\| \right)=S(n,\alpha):=\left\{\begin{array}{cl}\frac{
\int_{ \alpha/{\sqrt{n}}}^{1}\left(\sqrt{1-t^2}\right)^{n-3}{\rm d}t}
{2\int_{0}^{1}\left(\sqrt{1-t^2}\right)^{n-3}{\rm d}t},&{\rm ~if~}\alpha\le\sqrt{n},\\
0,&{\rm ~if~}\alpha>\sqrt{n}.\end{array}\right. \label{p2:1}
\end{equation}
Moreover, $S(n,\alpha)$
is strictly decreasing in terms of $\alpha\in(0,\sqrt{n})$ and satisfies that
\begin{equation}
S(n,\alpha)<e^{-0.45\alpha^2},~\forall n\ge 2,~\forall \alpha>0.\label{ie:1}
\end{equation}
\end{thm}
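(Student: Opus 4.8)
The plan is to prove the three assertions in turn --- the closed form for the probability, the strict monotonicity in $\alpha$, and the exponential tail bound --- the last being the genuinely hard part. First I would obtain the closed form by rotational invariance. Since the law of $\eta$ is invariant under orthogonal transformations, $\mathrm{Pr}(b^T\eta\ge\alpha\|b\|)$ depends on $b$ only through $\|b\|$, so I may take $b=\|b\|e_1$ and reduce the event to $\{\eta_1\ge\alpha\}$. Writing $\eta=\sqrt{n}\,u$ with $u$ uniform on the unit sphere $S^{n-1}$, a co-area (slicing) computation gives the marginal density of $u_1$ as proportional to $(1-s^2)^{(n-3)/2}$ on $[-1,1]$: fixing $u_1=s$ leaves an $(n-2)$-sphere of radius $\sqrt{1-s^2}$, whose surface measure scales like $(1-s^2)^{(n-2)/2}$, and the slant factor $(1-s^2)^{-1/2}$ supplies the remaining power. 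Hence $\mathrm{Pr}(\eta_1\ge\alpha)=\mathrm{Pr}(u_1\ge\alpha/\sqrt n)$ equals the ratio in (\ref{p2:1}), the denominator becoming $2\int_0^1$ by evenness; the case $\alpha>\sqrt n$ is vacuous since $|\eta_1|\le\sqrt n$.

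The strict monotonicity is immediate from (\ref{p2:1}): the denominator is independent of $\alpha$, and by the fundamental theorem of calculus the derivative of the numerator equals $-\frac{1}{\sqrt n}(1-\alpha^2/n)^{(n-3)/2}<0$ on $(0,\sqrt n)$, the integrand being strictly positive there for every $n\ge2$. In particular $S(n,\alpha)<\lim_{\alpha\to0^+}S(n,\alpha)=\frac12$. This already disposes of two regimes of the tail bound: when $\alpha>\sqrt n$ the probability is $0$, and when $0.45\alpha^2\le\ln2$, that is $\alpha\le\sqrt{\ln2/0.45}$, we get $S(n,\alpha)<\frac12\le e^{-0.45\alpha^2}$. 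Only $\alpha\in(\sqrt{\ln2/0.45},\sqrt n]$ then remains.

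On this interval I would estimate numerator and denominator of (\ref{p2:1}) separately. Write $p=(n-3)/2$ and $c=\alpha/\sqrt n$. Integrating the identity $\frac{d}{dt}(1-t^2)^{p+1}=-2(p+1)t(1-t^2)^p$ after bounding $1/t\le1/c$ on $[c,1]$ yields $\int_c^1(1-t^2)^p\,dt\le\frac{(1-c^2)^{p+1}}{2(p+1)c}\le\frac{e^{-(p+1)c^2}}{2(p+1)c}$, valid for every $n\ge2$ since $p+1=(n-1)/2>0$. For the denominator, the Beta evaluation $\int_0^1(1-t^2)^p\,dt=\frac{\sqrt\pi\,\Gamma(p+1)}{2\Gamma(p+3/2)}$ together with Gautschi's inequality gives the lower bound $\int_0^1(1-t^2)^p\,dt\ge\frac{\sqrt\pi}{2\sqrt{p+1}}$. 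Substituting $p+1=(n-1)/2$ and $c=\alpha/\sqrt n$ then produces a Gaussian-type estimate of the form $S(n,\alpha)\le\frac{1}{2\alpha}\sqrt{\frac{2n}{\pi(n-1)}}\,e^{-\frac{(n-1)\alpha^2}{2n}}$.

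The main obstacle is reconciling the exponent $\frac{n-1}{2n}$ with the target constant $0.45$. After taking logarithms the desired inequality becomes $\frac{1}{2\alpha}\sqrt{\frac{2n}{\pi(n-1)}}<e^{(\frac{n-1}{2n}-0.45)\alpha^2}$. For $n\ge10$ the exponent coefficient $\frac{n-1}{2n}-0.45$ is nonnegative while the left-hand side is below $1$ on the relevant range, so the estimate closes at once. For the finitely many cases $2\le n\le9$ this coefficient is negative, but then $\alpha$ is confined to the bounded interval $(\sqrt{\ln2/0.45},\sqrt n]$; since $\alpha\mapsto\ln\!\big(\frac{1}{2\alpha}\sqrt{\frac{2n}{\pi(n-1)}}\big)-(\frac{n-1}{2n}-0.45)\alpha^2$ is strictly convex, its maximum over that interval is attained at an endpoint, so it suffices to check the inequality at the two endpoints for each such $n$ (at the right endpoint $\alpha=\sqrt n$ one has $S(n,\alpha)=0$ outright). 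I expect the bookkeeping required to force the constant $0.45$ across all small $n$, rather than the structure of the argument, to be where the real care lies, since any loosening of the numerator or denominator bounds would push the attainable constant strictly below $0.45$.
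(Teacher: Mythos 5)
Your derivation of (\ref{p2:1}) and of the strict monotonicity is essentially the paper's own argument: rotational invariance reduces the event to a one-coordinate marginal (equivalently, normalized spherical-cap measure), and the derivative of the numerator is computed by the fundamental theorem of calculus. For the tail estimate (\ref{ie:1}), which is the hard part, you take a genuinely different route. The paper (Appendix B) uses only elementary bounds: for $n\ge 40$ it underestimates $\int_0^{\alpha/\sqrt n}(1-t^2)^{(n-3)/2}{\rm d}t$ by a chord (concavity of $\sqrt{1-t^2}$), overestimates $\int_{\alpha/\sqrt n}^{1}$ by a Riemann sum with mesh $1/n$, and is then left with a residual inequality $g(\alpha)<h(\alpha)$ verified piecewise on $[0,1.24]$, $[1.24,1.8]$, $[1.8,2.5]$, $[2.5,2.8]$ and $(2.8,\infty)$; the remaining cases $n\in\{2,\dots,39\}$ are dispatched by a separate numerical comparison of $\max_n S(n,\cdot)$ with $e^{-0.45\alpha^2}$ on the grid $\{0,1.2,2.9,3.8,4.9,6.3\}$. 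You instead (writing $p=(n-3)/2$) evaluate the denominator exactly as a Beta integral and bound it below by $\sqrt{\pi}/(2\sqrt{p+1})$ via Gautschi's inequality (equivalently, log-convexity of $\Gamma$), and bound the numerator by a Mills-ratio trick using the exact antiderivative of $t(1-t^2)^p$; this yields the uniform estimate $S(n,\alpha)\le\frac{1}{2\alpha}\sqrt{\frac{2n}{\pi(n-1)}}\,e^{-\frac{(n-1)\alpha^2}{2n}}$ for all $n\ge2$ in one stroke, after which the threshold drops from $n\ge 40$ to $n\ge 10$ (since $\frac{n-1}{2n}\ge 0.45$ exactly when $n\ge10$, while the prefactor is below $1$ once $\alpha>\sqrt{\ln 2/0.45}$), and the finite checking shrinks to endpoint evaluations for $n\in\{2,\dots,9\}$. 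All of these individual bounds are correct, so your approach buys a substantially less computational proof of the same constant $0.45$.

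One step needs repair. For $2\le n\le 9$ you argue that $F(\alpha):=\ln\bigl(\frac{1}{2\alpha}\sqrt{\frac{2n}{\pi(n-1)}}\bigr)+\bigl(0.45-\frac{n-1}{2n}\bigr)\alpha^2$ is strictly convex, so it suffices to check the two endpoints of $\bigl[\sqrt{\ln 2/0.45},\sqrt n\bigr]$; but at the right endpoint you invoke $S(n,\sqrt n)=0$ rather than $F(\sqrt n)\le 0$. That is not enough: convexity propagates the sign of $F$, not of $S$, and if $F(\sqrt n)$ were positive your Gaussian estimate would be useless near $\sqrt n$ even though $S$ vanishes there. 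Fortunately the needed inequality does hold: $F(\sqrt n)=-\ln 2+\frac12\ln\frac{2}{\pi(n-1)}+0.5-0.05n<0$ for every $n\ge2$ (it is about $-0.52$ at $n=2$ and decreasing in $n$), and likewise $F\bigl(\sqrt{\ln 2/0.45}\bigr)<0$ for $n=2,\dots,9$ (about $-0.48$ at $n=2$ and decreasing in $n$). Replace the parenthetical by these one-line computations and your proof is complete.
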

\begin{proof}
According to the Cauchy-Schwartz inequality, we have
$b^T\eta \le \|b\|\cdot\|\eta\|=\sqrt{n}\|b\|$,
which implies that
\[
0\le{\rm Pr}\left( b^T\eta \geq \alpha\|b\| \right) \le {\rm Pr}\left( b^T\eta> \sqrt{n}\|b\| \right)=0,
\]
when $\alpha>\sqrt{n}$ and $\|b\|>0$. Below we are sufficient to assume $\alpha\le \sqrt{n}$.

When $n=2$, define $y=(\eta_1/\sqrt{2},\eta_2/\sqrt{2})^T$, then $\|y\|=1$ and
\[
{\rm Pr}\left( b^T\eta \geq \alpha\|b\| \right)
={\rm Pr}\left( \left(\frac{b}{\|b\|}\right)^Ty \geq \frac{\alpha}{\sqrt{2}} \right)
=\frac{\arccos\frac{\alpha}{\sqrt{2}}}{\pi}.
\]
In this case, Equation (\ref{p2:1}) holds true. Now, we assume
$n\geq3$. Without loss of generality, we further assume $\|b\|=1$.

Denote by
$\Omega_n=\{z=(z_1,\ldots,z_n)^T\in \Bbb R^n\mid \|z\|=1\}$
 the $n$-dimensional  unit sphere centered at $0$.
The spherical cap is defined as a set of points $y$ in $\Omega_n$ located within distance $\rho=\sqrt{2-2\cos\beta}$ from
a fixed point $x\in \Omega_n$:
\[
{\rm Cap}_n(x,\beta)=\{y\in \Omega_n\mid \|y-x\|\le \rho \}=
\{y\in \Omega_n\mid x^Ty\ge \cos\beta\}.
\]
Due to the rotational invariance,  the Lebesgue measure
of the  spherical cap ${\rm Cap}_n(x,\beta)$ is independent of the location of $x$. We denote it by $C_n(\beta)$. More precisely, we have
\[
C_n(\beta)=k_{n-1}{\int_0}^\beta\sin^{n-2}\theta {\rm d}\theta,
\]
where
\begin{eqnarray*}
k_{n-1}&=&\left\{\begin{array}{ll}\frac{(2\pi)^\frac{n-1}{2}}{(n-3)!!},
&n=3,5,\ldots,\\
2\cdot\frac{(2\pi)^\frac{n-2}{2}}{(n-3)!!},
&n=4,6,\ldots,\end{array}\right.
\end{eqnarray*}
see for example, Appendix B of \cite{TV}.
It is easy to see that $C_n(\pi)=2C_n(\pi/2)$ represents the Lebesgue measure
of the unit sphere $\Omega_n$.

Define $\widetilde{y}=(\eta_1/\sqrt{n},\ldots,\eta_n/\sqrt{n})^T$.  We have $\widetilde{y}\in \Omega_n$ and
\[
{\rm Pr}\left( b^T\eta \geq \alpha \right)
={\rm Pr}\left( b^T\widetilde{y} \geq \frac{\alpha}{\sqrt{n}} \right)=C_n\left(\arccos{\frac{\alpha}{\sqrt{n}}}\right)\bigg/\left(2C_n(\pi/2)\right),
\]
which completes the proof of (\ref{p2:1}) by replacing $\theta$ with $\arccos t$ in the two integrations.

The proof of the tail estimation (\ref{ie:1}) is tedious. We leave it in Appendix B.
\end{proof}

Based on the strict monotonicity of $S(n,\alpha)$ in
terms of $\alpha\in(0,\sqrt{n})$ and the inequality (\ref{ie:1}),
we immediately have the following result.
\begin{cor}\label{lem3}
For any fixed $n\ge 2$,
$S(n,\alpha)$ in terms of $\alpha\in(0,\sqrt{n})$ has an inverse function $S^{-1}(n,\beta)$ satisfying $S(n,S^{-1}(n,\beta))=\beta$ and
\begin{equation}
S^{-1}(n,\beta)< \sqrt{\frac{20}{9}\ln\left(\frac{1}{\beta}\right)},~
\forall\beta\in(0,0.5).\label{ie:2}
\end{equation}
\end{cor}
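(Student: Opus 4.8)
The plan is to read off both assertions of the corollary directly from the two facts about $S(n,\alpha)$ already recorded in Theorem \ref{lem1}: its strict monotonicity in $\alpha\in(0,\sqrt{n})$ and the tail bound (\ref{ie:1}), $S(n,\alpha)<e^{-0.45\alpha^2}$. First I would argue that $S(n,\cdot)$ is invertible on $(0,0.5)$. From the integral representation (\ref{p2:1}), for fixed $n\ge 3$ the map $\alpha\mapsto S(n,\alpha)$ is continuous on $(0,\sqrt{n})$, since the numerator depends continuously (indeed differentiably) on the lower limit $\alpha/\sqrt{n}$; the case $n=2$ is covered by the explicit $\arccos$ formula. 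Evaluating the two endpoints gives $\lim_{\alpha\to 0^+}S(n,\alpha)=\frac{1}{2}$ (the numerator integral tends to half the denominator) and $\lim_{\alpha\to\sqrt{n}^-}S(n,\alpha)=0$ (the numerator integral vanishes). Together with the strict monotonicity already proved, this exhibits $S(n,\cdot)$ as a continuous, strictly decreasing bijection from $(0,\sqrt{n})$ onto $(0,\frac{1}{2})$, so a strictly decreasing inverse $S^{-1}(n,\beta)$ exists for every $\beta\in(0,0.5)$ and satisfies $S(n,S^{-1}(n,\beta))=\beta$, with $S^{-1}(n,\beta)\in(0,\sqrt{n})$.

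For the quantitative estimate (\ref{ie:2}), I would fix $\beta\in(0,0.5)$ and set $\bar\alpha:=\sqrt{\frac{20}{9}\ln(1/\beta)}$, which is well defined and positive because $\ln(1/\beta)>\ln 2>0$. The decisive algebraic point is that the constant $0.45$ in (\ref{ie:1}) equals $9/20$, so that $\frac{20}{9}\cdot 0.45=1$. If $\bar\alpha\ge\sqrt{n}$, then $S^{-1}(n,\beta)<\sqrt{n}\le\bar\alpha$ holds trivially since the inverse is valued in $(0,\sqrt{n})$. Otherwise $\bar\alpha\in(0,\sqrt{n})$, and I would apply (\ref{ie:1}) to get
\[
S(n,\bar\alpha)<e^{-0.45\bar\alpha^2}=e^{-\ln(1/\beta)}=\beta=S(n,S^{-1}(n,\beta)).
\]
Because $S(n,\cdot)$ is strictly decreasing on $(0,\sqrt{n})$ and both $\bar\alpha$ and $S^{-1}(n,\beta)$ lie in that interval, the inequality $S(n,\bar\alpha)<S(n,S^{-1}(n,\beta))$ inverts to $S^{-1}(n,\beta)<\bar\alpha$, which is exactly (\ref{ie:2}).

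I do not anticipate a genuine obstacle: once the monotonicity and the tail bound of Theorem \ref{lem1} are available, the argument is a short inversion keyed on the identity $\frac{20}{9}\cdot 0.45=1$. The only point needing minor care is that the monotonicity-inversion step requires both $\bar\alpha$ and $S^{-1}(n,\beta)$ to lie in the domain $(0,\sqrt{n})$ on which $S$ is strictly decreasing; this is precisely why the case $\bar\alpha\ge\sqrt{n}$ must be split off, where the conclusion is immediate from $S^{-1}(n,\beta)<\sqrt{n}$. Verifying the two boundary limits of $S$ rigorously (so as to pin down the range as $(0,\frac{1}{2})$ and justify the domain of $\beta$) is the only other routine check.
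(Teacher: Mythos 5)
Your proof is correct and takes essentially the same route as the paper: the paper derives this corollary ``immediately'' from the strict monotonicity of $S(n,\alpha)$ on $(0,\sqrt{n})$ and the tail bound (\ref{ie:1}), giving no further detail, and your argument is exactly that deduction with the omitted steps (the endpoint limits $\frac{1}{2}$ and $0$ pinning down the range $(0,\frac{1}{2})$, the identity $\frac{20}{9}\cdot 0.45=1$, and the case split $\bar\alpha\ge\sqrt{n}$) made explicit.
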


Following from Theorem \ref{lem1}, we can get the following tail estimation.
\begin{cor}\label{lem2}
Let $b^i\in \Bbb R^n,~i=1,\ldots,m$ and $I:=\{i\in\{1,\ldots,m\}\mid~\|b^i\|>0\}$.  Let $\eta$ be
uniformly
distributed over the sphere of radius $\sqrt{n}$ in $\Bbb R^n$. Then,
for any fixed $ \rho\in(0, 1)$,
\[
{\rm Pr}\left((b^i)^T\eta< S^{-1}\left(n,\frac{\rho}{m}\right)\|b^i\|,~i\in I\right)\ge 1-\rho>0.
\]
\end{cor}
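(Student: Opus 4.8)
The plan is to prove Corollary \ref{lem2} by combining Theorem \ref{lem1} with a union bound over the finitely many nontrivial indices in $I$. First I would fix an arbitrary index $i\in I$, so that $\|b^i\|>0$, which is exactly the hypothesis needed to apply Theorem \ref{lem1}. Setting $\alpha=S^{-1}(n,\rho/m)$, the defining property $S(n,S^{-1}(n,\rho/m))=\rho/m$ from Corollary \ref{lem3} together with the conclusion of Theorem \ref{lem1} immediately gives the single-index tail bound
\[
{\rm Pr}\left((b^i)^T\eta\ge S^{-1}\left(n,\frac{\rho}{m}\right)\|b^i\|\right)=S\left(n,S^{-1}\left(n,\frac{\rho}{m}\right)\right)=\frac{\rho}{m}.
\]
Here I should check that $S^{-1}(n,\rho/m)$ is well-defined and lies in $(0,\sqrt{n})$: since $\rho\in(0,1)$ we have $\rho/m\in(0,1/m)\subseteq(0,0.5)$, which is in the range $(0,0.5)$ where Corollary \ref{lem3} guarantees the inverse exists, so the evaluation is legitimate.

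Next I would pass to the complementary ``bad'' events and apply the union bound. For each $i\in I$, let $E_i$ denote the event $(b^i)^T\eta\ge S^{-1}(n,\rho/m)\|b^i\|$. The event whose probability we want to lower-bound is $\bigcap_{i\in I}E_i^c$, and its complement is $\bigcup_{i\in I}E_i$. By subadditivity of probability,
\[
{\rm Pr}\left(\bigcup_{i\in I}E_i\right)\le\sum_{i\in I}{\rm Pr}(E_i)=\sum_{i\in I}\frac{\rho}{m}=\frac{|I|\,\rho}{m}\le\frac{m\rho}{m}=\rho,
\]
where the last inequality uses $|I|\le m$. Taking complements yields
\[
{\rm Pr}\left((b^i)^T\eta< S^{-1}\left(n,\frac{\rho}{m}\right)\|b^i\|,~i\in I\right)={\rm Pr}\left(\bigcap_{i\in I}E_i^c\right)\ge1-\rho,
\]
and since $\rho<1$ this is strictly positive, establishing the claimed chain of inequalities.

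There is no serious obstacle here; the result is a routine union-bound consequence of the per-index estimate, and the real analytic work was already done in Theorem \ref{lem1} and Corollary \ref{lem3}. The only points requiring a moment of care are the domain check (confirming $\rho/m$ falls in the interval $(0,0.5)$ on which $S^{-1}$ is defined, which is immediate because $m\ge1$ and $\rho<1$) and the handling of indices with $\|b^i\|=0$: these are deliberately excluded by restricting to $I$, since Theorem \ref{lem1} requires $\|b\|>0$, so no degenerate case can spoil the argument. The strict positivity $1-\rho>0$ is exactly what guarantees, in the application to the approximation algorithm, that a feasible sample $\eta$ actually exists and can be found by repeated sampling.
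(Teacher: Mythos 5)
Your proof is correct and follows essentially the same route as the paper's own: the exact per-index tail probability $\rho/m$ obtained from Theorem~\ref{lem1}, a union bound over $i\in I$, and the estimate $|I|\le m$ to conclude $1-\frac{|I|}{m}\rho\ge 1-\rho>0$. One marginal remark: your domain check $(0,1/m)\subseteq(0,0.5)$ is only valid for $m\ge 2$ (for $m=1$ and $\rho\ge 0.5$ the quantity $S^{-1}(n,\rho/m)$ is undefined, since $S(n,\cdot)$ ranges over $(0,0.5)$), but this edge case is a defect of the statement itself that the paper's proof also ignores, so it does not count against your argument.
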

\begin{proof}
Let $|I|$ be the number of elements in $I$. If $I=\emptyset$, there is nothing to prove. So, we assume $1\le |I|\le m$.
Let $\beta=S^{-1}\left(n,\frac{\rho}{m}\right)$. According to Theorem \ref{lem1}, we have
\[
{\rm Pr}\left((b^i)^T\eta\geq \beta \|b^i\|\right)= \frac{\rho}{m},~i\in I.
\]
Then, it holds that
\begin{eqnarray*}
{\rm Pr}\left((b^i)^T\eta< \beta \|b^i\|,~i\in I\right)&=&1-{\rm Pr}(\left(b^i)^T\eta\geq \beta \|b^i\|~{\rm for~some}~i\in I\right)\\
&\geq& 1-\sum_{i\in I}{\rm Pr}\left((b^i)^T\eta\geq \beta \|b^i\|\right)\\
&=&1-\sum_{i=1}^{|I|}\frac{\rho}{m}=1-\frac{|I|}{m}\rho
\ge1-\rho>0.
\end{eqnarray*}
The proof is complete.
\end{proof}

\subsection{A new approximation algorithm for (${\rm P_{ball}}$)}
In this subsection, we propose a new approximation algorithm for solving $({\rm P_{ball}})$ as follows and then show that the approximation bound is  $\frac{1-O(\sqrt{\ln(m)/n})}{2}$, which positively answers
the first open question raised in \cite{HA13}.

~

\begin{center}
\fbox{\shortstack[l]{
{\bf A new approximation algorithm for $({\rm P_{ball}})$}\\
1.~ Input $\rho\in(0,1)$ and $x^i$ for  $i=1,\ldots,m$. Let $\alpha=S^{-1}\left(n,\frac{\rho}{m}\right)$. \\
2.~  Repeatedly uniformly generate  $\zeta\in\Bbb R^n$ on the surface  of an $n$-dimensional \\~~~~~unit sphere
 until  $\sqrt{n}(x^i)^T\zeta< \alpha\|x^i\|$  for $i\in\{1,\ldots,m\}\setminus\{k\mid \|x^k\|=0\}$.\\
3.~ Output $\widetilde{x}=\zeta.$
}}
\end{center}

~

\begin{thm}\label{thm05}
For the solution $\widetilde{x}$ returned by the above algorithm,  we have
\[
v({\rm CR_{ball}})\geq v({\rm P_{ball}})\geq f(\widetilde{x})>r\cdot v({\rm CR_{ball}})\geq r\cdot v({\rm P_{ball}}),
\]
where the approximation bound $r$ satisfies that
\[
r=\frac{1-\frac{S^{-1}\left(n,\frac{\rho}{m}\right)}{\sqrt{n}}}{2}>
\frac{1-\sqrt{\frac{20}{9n}\ln\left(\frac{m}{\rho}\right)}}{2}.
\]
\end{thm}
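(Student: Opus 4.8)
The plan is to establish the chain of inequalities
\[
v({\rm CR_{ball}})\geq v({\rm P_{ball}})\geq f(\widetilde{x})>r\cdot v({\rm CR_{ball}})\geq r\cdot v({\rm P_{ball}})
\]
by treating its pieces separately. The two outer inequalities $v({\rm CR_{ball}})\geq v({\rm P_{ball}})$ and the trivial $v({\rm CR_{ball}})\geq v({\rm P_{ball}})$ are immediate: $({\rm CR_{ball}})$ is a relaxation of $({\rm P_{ball}})'$, which is equivalent to $({\rm P_{ball}})$. The inequality $v({\rm P_{ball}})\geq f(\widetilde{x})$ holds because $\widetilde{x}=\zeta$ lies on the unit sphere (the algorithm samples from the sphere of radius $1$), hence $\widetilde{x}$ is feasible for $({\rm P_{ball}})$ and its objective value cannot exceed the optimum. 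The real content is therefore the single strict inequality $f(\widetilde{x})>r\cdot v({\rm CR_{ball}})$, and I would spend the bulk of the argument there.

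To bound $f(\widetilde{x})$ from below, I would first observe that the algorithm terminates with probability at least $1-\rho>0$ by Corollary \ref{lem2}, applied with $b^i=x^i$; thus a valid output $\widetilde{x}$ exists and satisfies $\sqrt{n}(x^i)^T\widetilde{x}<\alpha\|x^i\|$ for every $i$ with $\|x^i\|>0$, where $\alpha=S^{-1}(n,\rho/m)$. Since $\widetilde{x}=\zeta$ is on the unit sphere, writing out $f(\widetilde{x})=\min_i \omega_i\|\widetilde{x}-x^i\|^2=\min_i \omega_i(1-2(x^i)^T\widetilde{x}+\|x^i\|^2)$ and substituting the sampling bound $(x^i)^T\widetilde{x}<(\alpha/\sqrt{n})\|x^i\|$ gives, for each $i$,
\[
\omega_i(1-2(x^i)^T\widetilde{x}+\|x^i\|^2)>\omega_i\!\left(1-\tfrac{2\alpha}{\sqrt{n}}\|x^i\|+\|x^i\|^2\right).
\]
The key algebraic step is then to minimize the quadratic $1-\tfrac{2\alpha}{\sqrt{n}}t+t^2$ in $t=\|x^i\|\ge 0$ over all $i$; completing the square shows it is bounded below by $1-\tfrac{\alpha^2}{n}=(1-\tfrac{\alpha}{\sqrt{n}})(1+\tfrac{\alpha}{\sqrt{n}})$, but to land on the clean factor $r=\tfrac{1}{2}(1-\alpha/\sqrt{n})$ I would instead compare $f(\widetilde{x})$ against $v({\rm CR_{ball}})$ directly. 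I would use the optimal solution $(x^*,\zeta^*)$ of $({\rm CR_{ball}})$, for which $\zeta^*=v({\rm CR_{ball}})$ and $\omega_i(1-2(x^i)^Tx^*+\|x^i\|^2)\ge\zeta^*$ and $\|x^*\|\le 1$, to control each $\omega_i\|x^i\|^2$ and $\omega_i(x^i)^Tx^*$ in terms of $\zeta^*$, thereby converting the per-index lower bound into a multiple of $v({\rm CR_{ball}})$.

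For the final inequality $r>\tfrac{1}{2}(1-\sqrt{\tfrac{20}{9n}\ln(m/\rho)})$, I would simply invoke the tail bound of Corollary \ref{lem3}: since $\alpha=S^{-1}(n,\rho/m)$ and $\rho/m<0.5$, inequality (\ref{ie:2}) gives $\alpha<\sqrt{\tfrac{20}{9}\ln(m/\rho)}$, whence $\alpha/\sqrt{n}<\sqrt{\tfrac{20}{9n}\ln(m/\rho)}$ and the stated strict lower bound on $r$ follows by monotonicity of $t\mapsto\tfrac{1}{2}(1-t)$. I expect the main obstacle to be the middle step: pinning down exactly how the sampling inequality, applied to the distances $\|\widetilde{x}-x^i\|^2$, yields the factor $\tfrac{1}{2}(1-\alpha/\sqrt{n})$ rather than a different constant. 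The delicate point is handling the cross term $-2(x^i)^T\widetilde{x}$ uniformly over all $i$ while relating it back to $v({\rm CR_{ball}})$; this requires using both the sampling guarantee and the feasibility constraints of $({\rm CR_{ball}})$ in tandem, and getting the constant $\tfrac12$ to appear cleanly (likely via a worst-case balance between the linear and quadratic contributions of $\|x^i\|$) is where the argument must be carried out carefully.
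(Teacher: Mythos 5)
Your skeleton matches the paper's proof exactly: the two outer inequalities, the reduction of everything to the single strict inequality $f(\widetilde{x})>r\cdot v({\rm CR_{ball}})$ via a per-index comparison that uses the sampling guarantee on one side and the feasibility of $(x^*,\zeta^*)$ for $({\rm CR_{ball}})$ plus Cauchy--Schwarz on the other, and the final appeal to Corollary \ref{lem3} to get $\alpha<\sqrt{\tfrac{20}{9}\ln(m/\rho)}$. The problem is that the one step you explicitly defer --- ``getting the constant $\tfrac12$ to appear cleanly \ldots is where the argument must be carried out carefully'' --- is the entire content of the theorem, and you never carry it out; as written, you prove only the easy bookends and leave the middle inequality as a conjecture. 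The missing idea is elementary: apply $2\|x^i\|\le 1+\|x^i\|^2$ twice, in opposite directions. On the sampling side it upgrades your per-index bound to
\[
1-\tfrac{2\alpha}{\sqrt{n}}\|x^i\|+\|x^i\|^2\;\ge\;\Bigl(1-\tfrac{\alpha}{\sqrt{n}}\Bigr)\bigl(1+\|x^i\|^2\bigr),
\]
and on the relaxation side it gives
\[
\zeta^*\;\le\;\omega_i\bigl(1+2\|x^i\|\cdot\|x^*\|+\|x^i\|^2\bigr)\;\le\;\omega_i\bigl(1+\|x^i\|\bigr)^2\;\le\;2\,\omega_i\bigl(1+\|x^i\|^2\bigr).
\]
Both bounds are proportional to the \emph{same} quantity $1+\|x^i\|^2$, so the ratio is $\tfrac12\bigl(1-\tfrac{\alpha}{\sqrt{n}}\bigr)=r$ uniformly in $i$, with no optimization over $\|x^i\|$ needed. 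That is how the constant appears cleanly.

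Two further remarks. First, your conjectured ``worst-case balance'' route would also close the gap: the ratio to minimize is $\frac{1-2(\alpha/\sqrt{n})t+t^2}{(1+t)^2}=1-\bigl(2+\tfrac{2\alpha}{\sqrt{n}}\bigr)\frac{t}{(1+t)^2}$, and since $\max_{t\ge0}\frac{t}{(1+t)^2}=\tfrac14$ (attained at $t=1$), the minimum is exactly $\tfrac12\bigl(1-\tfrac{\alpha}{\sqrt{n}}\bigr)$; this is precisely the computation the paper performs later in its refined bound (Theorem 4.5, where $d\le1$ gives $\nu=2$). By contrast, your completing-the-square detour to $1-\tfrac{\alpha^2}{n}$ is a dead end and was rightly abandoned: it discards the $\|x^i\|^2$ growth of the numerator, which is needed to cancel the $(1+\|x^i\|)^2$ growth of the upper bound on $\zeta^*$. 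Second, you must treat indices with $\|x^i\|=0$ separately, since the sampling guarantee says nothing about them; this is trivial ($\omega_k\|\widetilde{x}-x^k\|^2=\omega_k\ge\omega_k(1-2(x^k)^Tx^*+\|x^k\|^2)\ge\zeta^*>r\,\zeta^*$), but your per-index display is only valid for $\|x^i\|>0$ and the case split should be stated.
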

\begin{proof}
According to the definition of $\widetilde{x}$ and $\alpha$,
we can verify that for $i=1,\ldots,m$ and $\|x^i\|>0$, it holds that
\begin{eqnarray}
\|\widetilde{x}-x^i\|^2&=&\|\widetilde{x}\|^2-2(x^i)^T\widetilde{x}+\|x^i\|^2\nonumber\\
&=&\|\widetilde{x}\|^2- 2(x^i)^T\zeta+\|x^i\|^2\nonumber\\
&>&\|\widetilde{x}\|^2-\frac{2\alpha}{\sqrt{n}}\|x^i\|+\|x^i\|^2\nonumber\\
&=&1-\frac{2\alpha}{\sqrt{n}}\|x^i\|+\|x^i\|^2\label{num:1}\\
&\ge&\left(1-\frac{\alpha}{\sqrt{n}}\right)\left(1+\|x^i\|^2\right),\label{num:2}
\end{eqnarray}
where the last inequality (\ref{num:2}) follows from the fact
\begin{equation}
2\|x^i\|\le 1+\|x^i\|^2.\label{num:3}
\end{equation}
If there is an index $k$ such that $\|x^k\|=0$, then
\begin{equation}
\|\widetilde{x}-x^k\|^2=1>
\left(1-\frac{\alpha}{\sqrt{n}}\right)\left(1+\|x^k\|^2\right).\label{0:1}
\end{equation}
Let $(x^*,\zeta^*)$ be an optimal solution of ${\rm (CR_{ball})}$. Then, for $i=1,\ldots,m$ and $\|x^i\|>0$, we have
\begin{eqnarray}
{\zeta^*}&\leq&{\omega_i}(1-2(x^i)^Tx^*+\|x^i\|^2)\nonumber\\
&\le&{\omega_i}(1+2\|x^i\|\cdot\|x^*\|+\|x^i\|^2)\label{num:4}\\
&\le&{\omega_i}(1+2\|x^i\|+\|x^i\|^2)\label{num:6}\\
&\leq&2{\omega_i}(1+\|x^i\|^2)\label{num:5},
\end{eqnarray}
where the second inequality (\ref{num:4}) follows from the Cauchy-Schwartz inequality and the last inequality (\ref{num:5}) holds due to (\ref{num:3}).
In the case that there is an index $k$ such that $\|x^k\|=0$, we have
\begin{equation}
{\zeta^*}\leq {\omega_k}(1-2(x^k)^Tx^*+\|x^k\|^2)=\omega_k
<2{\omega_k}(1+\|x^k\|^2).\label{0:2}
\end{equation}
Combining the inequalities (\ref{num:2}), (\ref{0:1}), (\ref{num:5}) and (\ref{0:2}) yields
\[
\min_{i=1,\ldots,m}\omega_i\|\widetilde{x}-x^i\|^2> \frac{1-\frac{\alpha}{\sqrt{n}}}{2}\zeta^*.
\]
Thus, we get
\[
v({\rm CR_{ball}})\geq v({\rm P_{ball}})\geq f(\widetilde{x})> \frac{1-\frac{\alpha}{\sqrt{n}}}{2}v({\rm CR_{ball}})\geq \frac{1-\frac{\alpha}{\sqrt{n}}}{2}v({\rm P_{ball}}).
\]
According to the definition of $\alpha$, it follows from (\ref{ie:2}) in
 Corollary  \ref{lem3} that
\begin{equation}
\alpha=S^{-1}\left(n,\frac{\rho}{m}\right)<\sqrt{\frac{20}{9}\ln\left(
\frac{m}{\rho}\right)}.
\label{alpha}
\end{equation}
The proof is complete.
\end{proof}

Theorem \ref{thm05} guarantees that (${\rm P_{ball}}$) admits a $1/2$ asymptotic approximation bound as $\frac{n}{\ln(m)}$ increases to infinity.
\begin{cor}
Let $\widetilde{x}$ be the solution returned by the above algorithm. Then, it holds that
\[
\liminf_{\frac{n}{\ln(m)}\rightarrow+\infty}\frac{v({\rm P_{ball}})}{v({\rm CR_{ball}})}
\ge \liminf_{\frac{n}{\ln(m)}\rightarrow+\infty}\frac{f(\widetilde{x})}{v({\rm CR_{ball}})}
\ge \frac{1}{2}.
\]
\end{cor}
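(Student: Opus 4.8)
The plan is to obtain the corollary as an immediate consequence of the per-instance bound already established in Theorem \ref{thm05}. First I would recall the chain of inequalities furnished there, namely
\[
v({\rm CR_{ball}})\geq v({\rm P_{ball}})\geq f(\widetilde{x})>r\cdot v({\rm CR_{ball}}),
\]
with $r=\frac{1}{2}\left(1-S^{-1}(n,\rho/m)/\sqrt{n}\right)$. Dividing through by $v({\rm CR_{ball}})>0$ yields, for every instance,
\[
\frac{v({\rm P_{ball}})}{v({\rm CR_{ball}})}\ge\frac{f(\widetilde{x})}{v({\rm CR_{ball}})}>r,
\]
so both ratios appearing in the statement are bounded below by the same explicit quantity $r$. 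This reduces the corollary to showing that $\liminf r\ge\frac{1}{2}$ along the regime $\frac{n}{\ln(m)}\to+\infty$.

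For the limiting step I would invoke the explicit upper bound $S^{-1}(n,\rho/m)<\sqrt{\frac{20}{9}\ln(m/\rho)}$ supplied by Corollary \ref{lem3} (inequality (\ref{ie:2})), which applies since $\rho/m\in(0,0.5)$ for $m\ge 2$. This gives
\[
r>\frac{1}{2}\left(1-\sqrt{\frac{20}{9n}\ln\left(\frac{m}{\rho}\right)}\right).
\]
Writing $\frac{\ln(m/\rho)}{n}=\frac{\ln m}{n}+\frac{\ln(1/\rho)}{n}$ and treating the confidence parameter $\rho\in(0,1)$ as a fixed constant, the hypothesis $\frac{n}{\ln(m)}\to+\infty$ forces $\frac{\ln m}{n}\to 0$ and, since this regime entails $n\to\infty$, also $\frac{\ln(1/\rho)}{n}\to 0$; hence the radicand tends to $0$ and the displayed lower bound for $r$ tends to $\frac{1}{2}$.

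Combining these observations, I would conclude $\liminf\frac{f(\widetilde{x})}{v({\rm CR_{ball}})}\ge\liminf r\ge\frac{1}{2}$, and the first inequality of the corollary then follows from $v({\rm P_{ball}})\ge f(\widetilde{x})$. The only point requiring mild care is the treatment of the fixed parameter $\rho$ inside the logarithm as the limit is taken; everything else is a direct substitution into Theorem \ref{thm05}. I do not anticipate any genuine obstacle here, since the substantive work—the tail estimate (\ref{ie:1}) and its inverse-function bound (\ref{ie:2})—has already been completed in establishing the preceding results.
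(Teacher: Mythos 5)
Your proposal is correct and matches the paper's approach: the paper states this corollary without proof as an immediate consequence of Theorem \ref{thm05}, and your argument simply makes explicit the steps implicit there (dividing the chain of inequalities by $v({\rm CR_{ball}})>0$ and letting the radicand $\frac{20}{9n}\ln(m/\rho)\rightarrow 0$ in the stated regime). The care you note about the fixed parameter $\rho$ is exactly the right (and only) detail to check.
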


Finally, we show that Theorem \ref{thm05} can be further refined with some additional information on $\|x^i\|$ ($i=1,\ldots,m$).

\begin{thm}
Let $d=\min\limits_{i=1,\ldots,m} \|x^i\|$. For the solution $\widetilde{x}$ returned by the above algorithm, we have
\[
v({\rm P_{ball}})\geq
f(\widetilde{x})> \left(\frac{\nu}{2+\nu}- \frac{2}{2+\nu}\sqrt{\frac{20}{9n}\ln\left(\frac{m}{\rho}\right)}
 \right)v({\rm CR_{ball}}),
\]
where
\[
\nu=\min_{t\ge d}\left\{t+1/t\right\}=\left\{\begin{array}{ll}
d+1/d,&{\rm if}~d> 1,\\
2,&~{\rm otherwise}.\end{array}\right.
\]
\end{thm}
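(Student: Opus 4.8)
The plan is to retrace the argument of Theorem \ref{thm05} almost verbatim, replacing the single crude estimate $2\|x^i\|\le 1+\|x^i\|^2$ (inequality (\ref{num:3})) by the sharper bound that the extra hypothesis on $d$ makes available.

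First I would record the key elementary inequality. The scalar function $t\mapsto t+1/t$ is decreasing on $(0,1)$ and increasing on $(1,\infty)$; consequently $\min_{t\ge d}\{t+1/t\}$ equals $d+1/d$ when $d>1$ and equals $2$ (attained at $t=1$) when $d\le 1$, which is exactly the stated value of $\nu$. Since $\|x^i\|\ge d$ for every $i$, it follows that $\|x^i\|+1/\|x^i\|\ge\nu$, that is,
\[
\nu\|x^i\|\le 1+\|x^i\|^2,\qquad i=1,\ldots,m.
\]
This also holds trivially, as $1\ge 0$, for any index with $\|x^i\|=0$ (which can only occur when $d=0$, whence $\nu=2$), so no separate degenerate analysis is needed.

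Next I would feed this inequality into the two estimates that drive the previous proof. Starting again from (\ref{num:1}), namely $\|\widetilde{x}-x^i\|^2>1-\frac{2\alpha}{\sqrt n}\|x^i\|+\|x^i\|^2$, and applying $2\|x^i\|\le\frac{2}{\nu}(1+\|x^i\|^2)$ to the middle term, I obtain the refined lower bound $\|\widetilde{x}-x^i\|^2>(1-\frac{2\alpha}{\nu\sqrt n})(1+\|x^i\|^2)$. Symmetrically, running the chain (\ref{num:4})--(\ref{num:5}) but invoking the same sharper inequality in place of (\ref{num:3}) yields the refined upper bound $\zeta^*\le\omega_i\frac{\nu+2}{\nu}(1+\|x^i\|^2)$. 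Dividing the former by the latter cancels the common positive factor $\omega_i(1+\|x^i\|^2)$ and produces, uniformly in $i$,
\[
\omega_i\|\widetilde{x}-x^i\|^2>\frac{\nu-\frac{2\alpha}{\sqrt n}}{\nu+2}\,\zeta^*
=\left(\frac{\nu}{\nu+2}-\frac{2\alpha}{(\nu+2)\sqrt n}\right)v({\rm CR_{ball}}).
\]
Finally I would invoke the bound (\ref{alpha}), $\alpha<\sqrt{\frac{20}{9}\ln(m/\rho)}$, to replace $\frac{\alpha}{\sqrt n}$ by $\sqrt{\frac{20}{9n}\ln(m/\rho)}$, take the minimum over $i$ on the left-hand side, and conclude that $f(\widetilde{x})>\left(\frac{\nu}{2+\nu}-\frac{2}{2+\nu}\sqrt{\frac{20}{9n}\ln(m/\rho)}\right)v({\rm CR_{ball}})$; the inequality $v({\rm P_{ball}})\ge f(\widetilde{x})$ is immediate since $\widetilde{x}$ lies on the unit sphere and is therefore feasible for $({\rm P_{ball}})$.

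There is no genuine obstacle here, since the result is a clean sharpening rather than a new idea; the only points requiring care are bookkeeping ones. I would verify that the elementary analysis of $t+1/t$ really produces the two-case formula for $\nu$, and that inequality (\ref{num:3}) is the \emph{only} place in the proof of Theorem \ref{thm05} where the slack $\nu=2$ is spent, so that replacing it consistently in both the distance lower bound and the $\zeta^*$ upper bound is legitimate. A useful sanity check to carry throughout is that setting $\nu=2$ must recover Theorem \ref{thm05} exactly, which confirms the algebra of the final ratio.
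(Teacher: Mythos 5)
Your proposal is correct and is essentially the paper's own proof in different algebraic packaging: the paper keeps the two estimates (\ref{num:1}) and (\ref{num:6}) together as the ratio $\frac{1-\frac{2\alpha}{\sqrt{n}}t+t^2}{1+2t+t^2}=1-\left(2+\frac{2\alpha}{\sqrt{n}}\right)\frac{t}{(1+t)^2}$ and minimizes it over $t\ge d$ via $\min_{t\ge d}\{t+1/t\}=\nu$ (see (\ref{mind})), which produces exactly your constant $\frac{\nu-2\alpha/\sqrt{n}}{2+\nu}$; applying $\nu t\le 1+t^2$ separately to the numerator and the denominator, as you do, is the same computation, with equality in both of your bounds attained at the same $t$.

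The only slip is your remark that indices with $\|x^i\|=0$ need ``no separate degenerate analysis'': the issue there is not the elementary inequality $\nu\|x^i\|\le 1+\|x^i\|^2$ but that (\ref{num:1}) is simply not available for such indices, since the algorithm's stopping rule only constrains $\zeta$ against points with $\|x^i\|>0$; the paper therefore treats them by the direct observation (\ref{0:3}) that $\|\widetilde{x}-x^k\|^2=1$ and $\zeta^*\le\omega_k$. Because $1>1-\frac{2\alpha}{\nu\sqrt{n}}$, your refined lower bound still holds at those indices and your conclusion is unaffected, but the one-line case distinction cannot be skipped by appealing to (\ref{num:1}).
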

\begin{proof}
If $d=0$, that is, there is an index $k$ such that $\|x^k\|=0$, then it holds that
\begin{equation}
\omega_k\|\widetilde{x}-x^k\|^2=\omega_k= {\omega_k}(1-2(x^k)^Tx^*+\|x^k\|^2)\ge
v({\rm CR_{ball}}),\label{0:3}
\end{equation}
which completes the proof when $m=1$. Generally, it follows from (\ref{num:1}), (\ref{num:6}) and (\ref{0:3}) that
\begin{eqnarray}
f(\widetilde{x})=\min_{i=1,\ldots,m}\omega_i\|\widetilde{x}-x^i\|^2
&>&\min_{i=1,\ldots,m}\left\{\frac{1-\frac{2\alpha}{\sqrt{n}}\|x^i\|+
\|x^i\|^2}{1+2\|x^i\|+\|x^i\|^2}\right\}\cdot v({\rm CR_{ball}})\nonumber\\
&\ge&\min_{t\ge d }\left\{\frac{1-\frac{2\alpha}{\sqrt{n}}t+
t^2}{1+2t+t^2}\right\}\cdot v({\rm CR_{ball}})\nonumber\\
&=&\min_{t\ge d }\left\{1- \left(2+\frac{2\alpha}{\sqrt{n}}\right)\frac{t}{(1+t)^2}\right\}\cdot v({\rm CR_{ball}})\nonumber\\
&=& \left(1- \left(2+\frac{2\alpha}{\sqrt{n}}\right)\frac{1}{2+\min_{t\ge d }\left\{t+1/t\right)}\right) \cdot v({\rm CR_{ball}}).\label{mind}
\end{eqnarray}
Plugging (\ref{alpha}) into (\ref{mind}) completes the proof.
\end{proof}

\section{Numerical experiments}
In this section, we present the numerical comparison between our new
approximation algorithm and the general algorithm proposed in \cite{HA13} (see also Section \ref{sec} in this paper) for solving $({\rm P_{ball}})$.

\subsection{Algorithm implementation}
We first give some details for implementing the two algorithms for solving $({\rm P_{ball}})$.


In the second step of the general algorithm in \cite{HA13}, according to  Theorem \ref{thm01}, we do not need to solve $({\rm SDP_{ball}})$. Instead, we first solve the second-order cone programming relaxation $({\rm CR_{ball}})$ using SDPT3 within CVX \cite{GrB} and then construct the optimization solution of $({\rm SDP_{ball}})$ based on (\ref{yy}).

While in our new algorithm, none of the convex relaxations needs to be solved.
In our first step, the value $\alpha=S^{-1}\left(n,\frac{\rho}{m}\right)$ is obtained by solving the nonlinear equation $S(n,\alpha)=\frac{\rho}{m}$ with respect to $\alpha$.
In the second step,  there are many approaches to randomly and uniformly generate points on the surface of an $n$-dimensional unit  hypersphere, for example, see \cite{GZW} and references therein. As pointed out by one referee, there is a much simpler method \cite{HJLZ14} as follows.
The existence of such a point is guaranteed by Corollary \ref{lem2}.

~

\begin{center}
\fbox{\shortstack[l]{
{\bf Uniform random sampling of points on a unit-hypersphere}\\
1.~Generate $\mu\in\Bbb R^n$ with independent standard  normal random
\\~~~~components $\mu_i$,~$i=1,\ldots,n$.\\
2.~Output $\mu/\|\mu\|$.}}
\end{center}

~

\subsection{Numerical results}

We do numerical experiments on $25$ random instances of dimension $n=5$, where all weights $\omega_1,\ldots,\omega_n$ are equal to $1$ and the number of input points $m$ varies from $6$ to $30$, as the instances when $m\le n$ are trivial to solve  according to Theorem \ref{poly}. All the numerical tests are implemented in MATLAB R2013b and run on a laptop with $2.3$ GHz processor and $4$ GB  RAM.

All the input points $x^i~(i=1,\ldots,m)$ with $m=6,7,\ldots,30$ orderly form an $n\times 450$ matrix. We randomly generate this matrix using the following Matlab scripts:
\begin{verbatim}
              rand('state',0); X = 2*rand(n,450)-1;
\end{verbatim}
and then independently run each of the two random algorithms $10$ times for each instance using the same setting $\rho=0.9999$. As references, we globally solve the 25 instances of $({\rm P_{ball}})$ by calling the  branch-and-bound type algorithm proposed in \cite{Lu} for the general quadratic constrained quadratic programming problem.

We report the numerical results in  Table \ref{tab1}.
The columns `$v{\rm(P)}$' and `$v{\rm(CR)}$' present the optimal objective function values of the 25 instances of ${\rm(P_{ball})}$ and the convex relaxation $({\rm CR_{ball}})$, respectively. The next two columns present the statistical results over the $10$ runs of the general algorithm proposed in \cite{HA13} and those of our new algorithm, respectively. The subcolumns `$v_{\max}$', `$v_{\min}$' and `$v_{\rm ave}$' give the best, the worst and the average objective function values found among $10$ tests, respectively. The  subcolumns `l.b.' present the  lower bounds on the optimal objective function values estimated by the two algorithms in comparison, i.e.,
$\frac{1-\sqrt{2\ln(m/\rho)\gamma_1^*}}{2}v({\rm SDP_{ball}})$ in \cite{HA13} (see also Theorem \ref{thm03} in this paper) and $\frac{1-S^{-1}\left(n,\frac{\rho}{m}\right)/\sqrt{n}}{2}v({\rm CR_{ball}})$ established in Theorem \ref{thm05}.
We highlight each statistical result for the general algorithm in bold, if it is better (i.e., larger) than the corresponding result for our new algorithm.

Table \ref{tab1} shows that the qualities of the solutions returned by our algorithm are generally higher than those obtained by the general algorithm in \cite{HA13}. In particular, there are a little more than half of instances such that the objective function values of the worst solutions returned by the general algorithm are even less than our new estimated lower bounds on the optimal values. Thus, our approximation solutions are more robust.
Moreover, the lower bounds on the positive optimal values estimated by the general algorithm in \cite{HA13} are all negative and have been significantly improved by our new estimated lower bounds.

 \begin{table}\tiny\caption{ Numerical results for $n=5$.}
\begin{center}
 \begin{tabular}[h]{|l|cc|cccc|cccc|}
 \hline
  \multirow{2}{*}{m} & \multirow{2}{*}{$v{\rm(P)}$} & \multirow{2}{*}{$v{\rm(CR)}$} & \multicolumn{4}{|c|}{the general algorithm in \cite{HA13}} &
  \multicolumn{4}{|c|}{our new algorithm}\\
\cline{4-11}
 & & &$v_{\max}$& $v_{\min}$&$v_{\rm ave}$
&l.b.&$v_{\max}$ & $v_{\min}$&$v_{\rm ave}$
&l.b.\\ \hline
6  &	2.74 &	2.74      &	{\bf2.74} &	1.12 &	{\bf1.80} &	-0.43     &	2.01 &	1.42 &	1.78 &	0.71 \\
7  &	2.50 &	2.50      &	{\bf1.94} &	0.25 &	1.03 &	-0.67     &	1.88 &	0.98 &	1.54 &	0.59 \\
8  &	1.80 &	1.80      &	{\bf1.78} &	0.45 &	1.00 &	-0.74     &	1.44 &	0.77 &	1.07 &	0.40 \\
9  &	2.45 &	2.45      &	{\bf2.23} &	0.25 &	0.71 &	-0.47     &	1.59 &	0.75 &	1.15 &	0.51 \\
10 &	2.29 &	2.31      &	1.45      &	0.29 &	0.71 &	-0.62     &	1.69 &	0.79 &	1.12 &	0.45 \\
11 &	2.22 &	2.22      &	1.80      &	0.33 &	1.00 &	-0.80     &	2.08 &	0.73 &	1.18 &	0.41 \\
12 &	2.21 &	2.21      &	{\bf2.21} &	0.37 &	{\bf1.14} &	-0.58     &	1.21 &	0.60 &	0.90 &	0.39 \\
13 &	1.73 &	1.74      &	{\bf1.39} &	0.61 &	0.91 &	-0.70     &	1.30 &	0.66 &	1.01 &	0.30 \\
14 &	1.81 &	1.81      &	{\bf1.81} &	0.24 &	0.89 &	-0.36     &	1.51 &	0.57 &	0.89 &	0.30 \\
15 &	2.02 &	2.19      &	1.44      &	0.09 &	0.80 &	-0.20     &	1.48 &	0.68 &	0.96 &	0.35 \\
16 &	1.89 &	1.89      &	{\bf1.45} &	0.31 &	0.71 &	-0.84     &	1.42 &	0.62 &	0.97 &	0.29 \\
17 &	2.13 &	2.13      &	{\bf2.11} &	0.44 &	0.76 &	-0.60     &	1.57 &	0.84 &	1.18 &	0.31 \\
18 &	1.65 &	1.93      &	0.88      &	0.23 &	0.50 &	-0.37     &	0.91 &	0.51 &	0.74 &	0.28 \\
19 &	1.72 &	1.93      &	1.02      &	0.27 &	0.61 &	-0.32     &	1.28 &	0.63 &	0.94 &	0.27 \\
20 &	2.09 &	2.51      &	{\bf1.69} &	0.43 &	0.85 &	-0.13     &	1.42 &	0.75 &	1.06 &	0.34 \\
21 &	1.92 &	2.07      &	0.94      &	0.27 &	0.53 &	-0.76     &	1.22 &	0.55 &	0.82 &	0.27 \\
22 &	1.77 &	2.20      &	0.80      &	0.44 &	0.62 &	-0.37     &	1.41 &	0.64 &	0.98 &	0.28 \\
23 &	1.83 &	2.13      &	{\bf1.18} &	0.39 &	0.67 &	-0.26     &	1.04 &	0.52 &	0.75 &	0.27 \\
24 &	1.76 &	1.85      &	0.76      &	0.49 &	0.59 &	-0.31     &	1.28 &	0.59 &	0.78 &	0.23 \\
25 &	1.73 &	1.92      &	1.05      &	0.20 &	0.45 &	-0.35     &	1.26 &	0.43 &	0.87 &	0.23 \\
26 &	1.53 &	1.82      &	{\bf1.17} &	0.05 &	0.45 &	-0.43     &	1.08 &	0.53 &	0.74 &	0.21 \\
27 &	1.56 &	1.88      &	0.85      &	0.25 &	0.49 &	-0.21     &	1.23 &	0.43 &	0.73 &	0.22 \\
28 &	1.73 &	1.85      &	{\bf1.46} &	0.18 &	0.54 &	-0.41     &	1.42 &	0.45 &	0.78 &	0.21 \\
29 &	2.38 &	2.39      &	{\bf1.83} &	0.24 &	0.71 &	-0.87     &	1.53 &	0.49 &	1.02 &	0.27 \\
30 &	1.60 &	1.82      &	1.07      &	0.14 &	0.50 &	-0.57     &	1.24 &	0.47 &	0.84 &	0.20 \\
 \hline
 \end{tabular}
 \end{center}\label{tab1}
 \end{table}

\section{Conclusions}
For the $n$-dimensional ball-constrained weighted maximin dispersion problem $({\rm P_{ball}})$, complexity and approximation bounds  remain unknown.
In this paper, we propose a new second-order cone programming (SOCP) relaxation for $({\rm P_{ball}})$, which is shown to be equivalent to the standard semidefinite programming (SDP) relaxation. Besides, applying the new relaxation approach to the box-constrained weighted maximin dispersion problem $({\rm P_{box}})$ yields a linear programming relaxation, which is also equivalent to the corresponding SDP relaxation. Furthermore, with the help of the new SOCP relaxation, we show that $({\rm P_{ball}})$ is polynomially solvable if $m\le n$, where $m$ is the number of given points. A more general sufficient condition is also provided. In general, we have proved that $({\rm P_{ball}})$ is NP-hard.
Then, we propose a new randomized approximation algorithm for solving $({\rm P_{ball}})$, which provides an approximation bound of $\frac{1-O(\sqrt{\ln(m)/n})}{2}$.
Notice that $({\rm P_{ball}})$ and $({\rm P_{box}})$ are the
two  special cases of $({\rm P}_{\chi_p})$, where $\chi_p=\{x\mid~\|x\|_p\le 1\}$
and $\|x\|_p:=\left(\sum_{i=1}^n |x_i|^p\right)^{\frac{1}{p}}$ is the $\ell_p$-norm of $x$. It is the future work to study the complexity and approximation bound
of $({\rm P}_{\chi_p})$ for  $p\in[1,+\infty)$.

\section*{Acknowledgments}
The authors thank Dr. Cheng Lu for sharing the MATLAB code for globally solving the general quadratic constrained quadratic programming problem, which is used to get the optimal values of our test examples. The authors are grateful to the two anonymous referees for their valuable comments.


\section{Appendix}

\subsection{Appendix A: supplementary proof of Theorem \ref{thm32}}
Since the root $t^*$ can be polynomially approximated, let $\widetilde{L}$ be an invertible rational matrix and $\widetilde{\Lambda}$ be a rational diagonal matrix such that they are sufficiently close to $\Lambda$ (\ref{Lmbd}) and  $L$ (\ref{LL}), respectively. Suppose  it holds that
\begin{eqnarray}
&&\left|\|\widetilde{L}_i\|^2-\|L_i\|^2\right|=
\left|\|\widetilde{L}_i\|^2-1\right|\le \epsilon_1, \label{np:e1}\\
&&\left|{\rm Tr}(\widetilde{\Lambda})-{\rm Tr}(\Lambda)\right|\le \epsilon_2, \label{np:e2}\\
&& \| \widetilde{\Lambda}-aa^T-\widetilde{L}^{-T}\widetilde{L}^{-1}\|\le
\frac{\epsilon_2}{8n},\label{np:e3}\\
&&  \widetilde{\Lambda}-aa^T \succ 0,\nonumber
\end{eqnarray}
where $\epsilon_2\in(0,\frac{1}{2})$,
\begin{equation}
\epsilon_1\in\left(0,\frac{\epsilon_2}{4\sqrt{U+\epsilon_2/8}
\sqrt{U+3\epsilon_2/8}\left(\sqrt{U+\epsilon_2/8}+\sqrt{U+3\epsilon_2/8}\right)}
\right),\label{eps1}
\end{equation}
and $U$ is any constant satisfying $U\ge{\rm Tr}(\Lambda)$.

Let $v_a:=\min_{x\in\{-1,1\}^n}(a^Tx)^2$. (PP) (\ref{pp}) has no solution if and only $v_a\ge1$.
Since the elements of $a$ are all integers, $v_a$ remains an integer. Then, it holds that
\begin{eqnarray*}
&&v_a\ge 1~\Longleftrightarrow~ v_a>\frac{\epsilon_2}{2}, \\
&&v_a< 1~\Longleftrightarrow~ v_a< \epsilon_2.
\end{eqnarray*}
Let the Hessian matrix of (BQP) (\ref{bqp:1}) be $Q=\frac{1}{4}\left(\widetilde{\Lambda}-aa^T\right)$.
Let (${\rm \overline{BQP}}$) and (${\rm \overline{QCQP}}$) be the problems obtained by
replacing the Hessian matrix $Q$ with $\frac{1}{4}\widetilde{L}^{-T}\widetilde{L}^{-1}$ in (BQP) (\ref{bqp:1}) and (QCQP) (\ref{qcqp:1}), respectively.

Clearly,  it holds  that
\[
v{\rm(BQP)}= \frac{1}{4}{\rm Tr}(\widetilde{\Lambda})-\frac{1}{4}v_a.
\]
Therefore, (PP) (\ref{pp}) has a solution if and only if $v{\rm(BQP)}> \frac{1}{4}{\rm Tr}(\widetilde{\Lambda})-\frac{\epsilon_2}{4}$  and (PP) (\ref{pp}) has no solution if and only if $v{\rm(BQP)}< \frac{1}{4}{\rm Tr}(\widetilde{\Lambda})-\frac{\epsilon_2}{8}$.

According  to Lemma \ref{lem4}, we have
\begin{equation}
v({\rm \overline{QCQP}})=\left\{\begin{array}{cl}
2-\frac{1}{\sqrt{v({\rm \overline{BQP}})}},& v({\rm \overline{BQP}})\ge 1,\\
1,& v({\rm \overline{BQP}})< 1.
\end{array}
\right. \label{np:5}
\end{equation}
According to (\ref{np:e3}), we have
\[
\left|x^TQx-x^T\left(\frac{1}{4}\widetilde{L}^{-T}\widetilde{L}^{-1}\right)x\right|
\le \frac{1}{4}\| \widetilde{\Lambda}-aa^T-\widetilde{L}^{-T}\widetilde{L}^{-1}\|\cdot\|x\|^2 \le \frac{\epsilon_2}{32},~\forall x\in[-1,1]^n,
\]
which implies that
\begin{equation}
\left|v({\rm BQP})-v({\rm \overline{BQP}})\right|\le \frac{\epsilon_2}{32}.
\label{qp32}
\end{equation}

Without loss of generality, we assume $n\ge 5$. Then, we have
\begin{equation}
{\rm Tr}(\widetilde{\Lambda}) \ge {\rm Tr}(\Lambda)-\epsilon_2> {\rm Tr}(\Lambda)-\frac{1}{2}>n-\frac{1}{2}\ge \frac{9}{2}.
\label{np:0}
\end{equation}
According to (\ref{np:5}), the inequality
\begin{equation}
v({\rm \overline{BQP}})>\frac{1}{4}{\rm Tr}(\widetilde{\Lambda})-\frac{7\epsilon_2}{32}(>1)
\label{qp7}
\end{equation}
holds if and only if
\begin{equation}
v({\rm \overline{QCQP}})>2-\frac{2}{\sqrt{{\rm Tr}(\widetilde{\Lambda})-7\epsilon_2/8}}.
\label{np:1}
\end{equation}
Similarly, the following inequality
 \begin{equation}
 v({\rm \overline{BQP}})<\frac{1}{4}{\rm Tr}(\widetilde{\Lambda})-\frac{5\epsilon_2}{32}
 \label{qp5}
\end{equation}
holds if and only if either $v({\rm \overline{QCQP}})<2-\frac{2}{\sqrt{{\rm Tr}(\widetilde{\Lambda})- 5\epsilon_2/8}}$ or $v({\rm \overline{QCQP}})= 1$,
which is further equivalent to
\begin{equation}
v({\rm \overline{QCQP}})<2-\frac{2}{\sqrt{{\rm Tr}(\widetilde{\Lambda})- 5\epsilon_2/8 }},
\label{np:2}
\end{equation}
since we always have $1<2-\frac{2}{\sqrt{{\rm Tr}(\widetilde{\Lambda})-5\epsilon_2/8}}$ according  to (\ref{np:0}) and the inequality $\epsilon_2<\frac{1}{2}<\frac{4}{5}$.

By introducing $y=\frac{1}{2}\widetilde{L}^{-1}x$, we can reformulate
(${\rm \overline{QCQP}}$) as
\begin{eqnarray*}
 &\max_{y,~s}& ~y^Ty+s \\
&{\rm s.~t.}& 2\widetilde{L}_iy\le 1-s,~i=1,\ldots,n,\\
& & -2\widetilde{L}_iy\le 1-s,~i=1,\ldots,n,\\
&&y^Ty\le 1.\nonumber
\end{eqnarray*}
On the other hand, setting $\omega_1=\ldots=\omega_n=1$, $m=2n$ and
\begin{eqnarray*}
&&x^i=\widetilde{L}_i^T,~i=1,\ldots,n,\\
&&x^i=-\widetilde{L}_{i-n}^T,~i=n+1,\ldots,2n,
\end{eqnarray*}
in ${\rm (P_{ball})}$ yields
\begin{eqnarray*}
{\rm (P_{ball}^L)}~~&\max_{x,~s}& ~x^Tx+s\\
&{\rm s.~t.}&
2\widetilde{L}_ix\le \|\widetilde{L}_i\|^2-s,~i=1,\ldots,n,\\
& & -2\widetilde{L}_ix\le \|\widetilde{L}_i\|^2-s,~i=1,\ldots,n,\\
&&x^Tx\le 1.
\end{eqnarray*}
Therefore, we have
\begin{eqnarray*}
v{\rm (P_{ball}^L)}&=&\max_{x^Tx\le 1}\{x^Tx+\min_{i=1,\ldots,n}\{\|\widetilde{L}_i\|^2-2\widetilde{L}_ix,
\|\widetilde{L}_i\|^2+2\widetilde{L}_ix\}\}\\
&\le&\max_{x^Tx\le 1}\{x^Tx+\min_{i=1,\ldots,n}\{1+\epsilon_1-2\widetilde{L}_ix,
1+\epsilon_1+2\widetilde{L}_ix\}\}\\
&=&\epsilon_1+\max_{x^Tx\le 1}\{x^Tx+\min_{i=1,\ldots,n}\{1-2\widetilde{L}_ix,
1+2\widetilde{L}_ix\}\}\\
&=&\epsilon_1+v({\rm \overline{QCQP}}),
\end{eqnarray*}
where the inequality follows from (\ref{np:e1}). Similarly, we can show that
\[
v{\rm (P_{ball}^L)}\ge -\epsilon_1+v({\rm \overline{QCQP}}).
\]
Now, if the inequality
\begin{equation}
v{\rm (P_{ball}^L)}>2-\frac{2}{\sqrt{{\rm Tr}(\widetilde{\Lambda})-7\epsilon_2/8}}+\epsilon_1, \label{np:lst1}
\end{equation}
holds, then we obtain (\ref{np:1}). It follows from (\ref{qp32}) and (\ref{qp7}) that
\[
v{\rm (BQP)}>v({\rm \overline{BQP}})-\frac{\epsilon_2}{32}>\frac{1}{4}{\rm Tr}(\widetilde{\Lambda})-\frac{7\epsilon_2}{32}-\frac{\epsilon_2}{32}=\frac{1}{4}{\rm Tr}(\widetilde{\Lambda})-\frac{\epsilon_2}{4}.
\]
Consequently, (PP) (\ref{pp}) has a solution. Similarly, suppose
\begin{equation}
v{\rm (P_{ball}^L)}<2-\frac{2}{\sqrt{{\rm Tr}(\widetilde{\Lambda})-5\epsilon_2/8}}-\epsilon_1,\label{np:lst2}
\end{equation}
then (\ref{np:2}) holds. According to (\ref{qp32}) and (\ref{qp5}), we obtain
\[
v{\rm (BQP)}<v({\rm \overline{BQP}})+\frac{\epsilon_2}{32}<\frac{1}{4}{\rm Tr}(\widetilde{\Lambda})-\frac{5\epsilon_2}{32}+\frac{\epsilon_2}{32}=
\frac{1}{4}{\rm Tr}(\widetilde{\Lambda})-\frac{\epsilon_2}{8}.
\]
It implies that (PP) (\ref{pp}) has no solution.

Since the upper bound in the definition $\epsilon_1$ (\ref{eps1}) is a decreasing function with respect to $U$, and according to the inequalities
\[
U\ge{\rm Tr}(\Lambda) \ge {\rm Tr}(\widetilde{\Lambda})-\epsilon_2,
\]
where the second inequality follows from (\ref{np:e2}), we have
\[
\epsilon_1<  \frac{1}{\sqrt{{\rm Tr}(\widetilde{\Lambda})-7\epsilon_2/8}}-
\frac{1}{\sqrt{{\rm Tr}(\widetilde{\Lambda})-5\epsilon_2/8}}.
\]
It implies that one of the two inequalities (\ref{np:lst1}) and (\ref{np:lst2}) must be satisfied.
Therefore, we  conclude that ${\rm (P_{ball})}$ is NP-hard.

\subsection{Appendix B: proof of the tail estimation (\ref{ie:1}) in Theorem \ref{lem1}}
For any fixed $n$, since
\begin{equation}
\frac{\partial S(n,\alpha)}{\partial \alpha}
=-\frac{(1-\alpha^2/n)^{\frac{n-3}{2}}}
{2\sqrt{n}\int_{0}^{1}\left(\sqrt{1-t^2}\right)^{n-3}{\rm d}t}
 <0,\label{s:de}
\end{equation}
$S(n,\alpha)$ is  strictly  decreasing in terms of $\alpha\in(0,\sqrt{n})$.

The proof of (\ref{ie:1}) is divided into
two parts: (a) $n\ge40$ and (b) $2\le n\le39$.

(a) Suppose $n\geq 40$. First, we have
\begin{eqnarray}
&&\int_0^{\frac{\alpha}{\sqrt{n}}}\left(\sqrt{1-t^2}\right)^{n-3}{\rm d}t\nonumber\\
&>&\int_0^{\frac{\alpha}{\sqrt{n}}}\left(1-\frac{\sqrt{n}-\sqrt{n-\alpha^2}}{\alpha}t\right)^{n-3}{\rm d}t\label{e1}\\
&=& \frac{n+\sqrt{n^2-\alpha^2n}}{\alpha(n-2)\sqrt{n}}\cdot\left(
1-\bigg(1-\frac{\alpha^2}{ n+\sqrt{n^2-\alpha^2n}} \bigg)^{n-2}\right)\nonumber\\
&>&\frac{1}{\alpha\sqrt{n}}\left(
1- \bigg(1-\frac{\alpha^2}{2n} \bigg)^{n\cdot \frac{n-2}{n}}\right) \nonumber\\
&\ge&\frac{1}{\alpha\sqrt{n}}\left(
1- \bigg(1-\frac{\alpha^2}{2n} \bigg)^{0.95n }\right) ~~\left({\rm as}~\frac{n-2}{n}\geq \frac{40-2}{40}=0.95\right) \nonumber\\
&>&\frac{1}{\alpha\sqrt{n}}\left(1-e^{-0.475\alpha^2}\right),\label{rel:4}
\end{eqnarray}
where the first inequality (\ref{e1}) holds as the strict concavity of $\sqrt{1-t^2}$ implies that
 \[
\sqrt{1-t^2}> 1-\frac{\sqrt{n}-\sqrt{n-\alpha^2}}{\alpha}t,~\forall~ t\in \left(0,\frac{\alpha}{\sqrt{n}}\right),
\]
and the last inequality (\ref{rel:4})  holds since $\left(1-\frac{1}{2n} \right)^{n}=e^{n\log\left(1-\frac{1}{2n} \right)}$ is an increasing function of $n$ and has a limit $e^{-\frac{1}{2}}$ when $n\rightarrow \infty$.

Secondly, we have
\begin{eqnarray}
&&\int_{\frac{\alpha}{\sqrt{n}}}^{1}\left(\sqrt{1-t^2}\right)^{n-3}{\rm d}t
\nonumber\\
&=&
\sum_{k=0}^{[{(1-\frac{\alpha}{\sqrt{n}})n}]}
\int_{\frac{\alpha}{\sqrt{n}}+\frac{k}{n}}^{\min(1,\frac{\alpha}{\sqrt{n}}
+\frac{k+1}{n})}\left(\sqrt{1-t^2}\right)^{n-3}{\rm d}t
\nonumber\\
&\leq & \sum_{k=0}^{[{(1-\frac{\alpha}{\sqrt{n}})n}]}
\frac{1}{n}\cdot\bigg(\sqrt{1-\bigg(\frac{\alpha}{\sqrt{n}}
+\frac{k}{n}\bigg)^2}\bigg)^{n-3}\label{e2}\\
&=& \frac{1}{n}\sum_{k=0}^{[{(1-\frac{\alpha}{\sqrt{n}})n}]}
\bigg(1-\bigg(\frac{\alpha}{\sqrt{n}}+\frac{k}{n}\bigg)^2\bigg)^{\frac{n-3}{2}}\nonumber\\
&<&{\frac{1}{n}}\sum_{k=0}^{[{(1-\frac{\alpha}{\sqrt{n}})n}]}
e^{-\frac{n-3}{2}\cdot(\frac{\alpha}{\sqrt{n}}+\frac{k}{n})^2}\label{e3}\\
&\leq&\frac{1}{n}\sum_{k=0}^{\infty} e^{-\frac{n-3}{2n}\cdot{\frac{\alpha^2n+2\alpha \sqrt{n}k+2k-1}{n}}}\nonumber\\
&\leq&\frac{1}{n}\sum_{k=0}^{\infty} e^{-0.4625\cdot \left(\alpha^2-\frac{1}{n}+\frac{2\alpha\sqrt{n}+2}{n}k \right)}~~\left({\rm as}~\frac{n-3}{2n}\ge \frac{40-3}{80}=0.4625\right)\nonumber\\
&= & \frac{e^{-0.4625\left(\alpha^2-\frac{1}{n}\right)}}{n}\sum_{k=0}^{\infty}e^{-0.4625
\left(\frac{2\alpha}{\sqrt{n}}+\frac{2}{n}\right)k}\nonumber\\
&\leq & \frac{e^{-0.4625 \left(\alpha^2-\frac{1}{40}\right)}}{n}\cdot \frac{1}{1-e^{-0.4625\left(\frac{2\alpha}{\sqrt{n}}+\frac{2}{n}\right) }}\nonumber\\
&\leq & \frac{e^{-0.4625 \left(\alpha^2-\frac{1}{40}\right)}}{\sqrt{n}}
\cdot\frac{1}{\sqrt{n}\left(1-e^{-0.4625\frac{2\alpha}{\sqrt{n}}}\right)}  \nonumber\\
&\leq & \frac{e^{0.4625/40}e^{-0.4625\alpha^2}}{\sqrt{n}\sqrt{40}\left(1-e^{-0.4625
\frac{2\alpha}{\sqrt{40}} }\right)}\label{rel:5}~~({\rm as}~n\ge 40)\\
&<& \frac{0.16~e^{-0.4625\alpha^2}}{\sqrt{n}\left(1-~e^{
 -0.1462{\alpha}  }\right)},\label{rel:6}
\end{eqnarray}
where  the inequality (\ref{e2}) holds since $\sqrt{1-t^2}$ is decreasing with respect to $t\in[0,1]$,
the inequality (\ref{e3}) follows from
$
1-x^2<e^{-x^2},~\forall ~x\in(0,1)$,
and the inequality (\ref{rel:5}) holds since the function $w(x)={x\bigg(1-e^{{-0.925}\frac{\alpha}{x}}\bigg)}$
is strictly increasing in terms of $x$, which can be verified by checking the positivity of the derivative of $w(x)$:
\[
w'(x)
= e^{-\frac{0.925\alpha}{x}}
\left(e^{\frac{0.925\alpha}{x}}-
\left(1+\frac{0.925\alpha}{x}\right)\right)>0.
\]

Plugging (\ref{rel:4}) and (\ref{rel:6}) into (\ref{p2:1}) yields
\[
S(n,\alpha)<\frac{ 0.08\alpha~e^{-0.4625\alpha^2} }
{ 0.16\alpha~e^{-0.4625\alpha^2} + \left(1-e^{
 -0.1462{\alpha}}\right)\left(1-e^{-0.475\alpha^2}\right)}.
\]
In order to prove (\ref{ie:1}), it is sufficient to show that the inequality
\[
g(\alpha):= 0.08\alpha~e^{-0.0125\alpha^2}\left(1-2e^{-0.45\alpha^2}\right)<
 h(\alpha):=\left(1-e^{
 -0.1462{\alpha}}\right)\left(1-e^{-0.475\alpha^2}\right)
\]
holds for any $\alpha>0$. Firstly, we have
\begin{eqnarray*}
&&g(\alpha)<0<h(\alpha),~\forall \alpha\in (0,1.24], \\
&&g(\alpha)<0.08\max_{\alpha}\{\alpha~e^{-0.0125\alpha^2}\}<0.32<h(2.8)\le h(\alpha),~
\forall \alpha>2.8.
\end{eqnarray*}
Secondly, since
both $g(\alpha)$ and $h(\alpha)$ are increasing in the interval $[1.24,2.8]$,
we have
\begin{eqnarray*}
&&g(\alpha)\le g(1.8)<h(1.24)\le h(\alpha),~\forall \alpha\in[1.24,1.8],\\
&&g(\alpha)\le g(2.5)<h(1.8)\le h(\alpha),~\forall \alpha\in[1.8,2.5],\\
&&g(\alpha)\le g(2.8)<h(2.5)\le h(\alpha),~\forall \alpha\in[2.5,2.8].
\end{eqnarray*}
The proof of (\ref{ie:1}) in the case of $n\ge 40$ is complete.

(b)
Suppose $n\in\{2,3,\ldots,39\}$. Define
$q(\alpha):=e^{-0.45\alpha^2}$ and
\[
m(\alpha):=\max_{n\in\{2,3,\ldots,39\}} S(n,\alpha).
\]
Since $S(n,\alpha)$ is decreasing in terms of  $\alpha$,
$m(\alpha)$ is also monotonically decreasing.
Let $
\{x_1,x_2,x_3,x_4,x_5,x_6\}=\{0,1.2,2.9,3.8,4.9,6.3\}.$
We can verify that
\[
m(x_k)<q(x_{k+1}), ~k=1,\ldots,5.
\]
According to the monotonicity of $q(\alpha)$ and $m(\alpha)$, we have
\begin{equation}
m(\alpha)\le m(x_k)<q(x_{k+1})\le q(\alpha),
~\forall \alpha\in [x_{k},x_{k+1}],~k=1,\ldots,5.
\end{equation}
Since $(0,\sqrt{39}]\subseteq \cup_{k=1}^5[x_k,x_{k+1}]$ and $m(\alpha)=0$
when $\alpha\ge \sqrt{39}$,
the proof of (\ref{ie:1}) in the case of $n\in\{2,3,\ldots,39\}$ is complete.

\end{document}